%%%%%%%%%%%%%%%%%%%%%%%%%%%%%
%
%   PAPER Revised January 2018
%
%%%%%%%%%%%%%%%%%%%%%%%%%%%%%
 \documentclass[smallcondensed]{svjour3} 
 
  \usepackage{amsmath,amssymb}

 \usepackage{booktabs}

 \usepackage{amsfonts}
  \usepackage{pifont}
 \usepackage{graphicx}
 \usepackage{color}
 \usepackage{hyperref}
 \usepackage{geometry}
\usepackage{algpseudocode}
\usepackage{algorithm}
\algnewcommand\algorithmicinput{\textbf{Input:}}
\algnewcommand\INPUT{\item[\algorithmicinput]}

\bibliographystyle{plain}

%%%%%%%%%%%%%%%%%%%% 
\newcommand{\phit}{\tilde{\phi}}
\newcommand{\lambdapert}{\tilde{\lambda}}
\newcommand{\Pit}{\widetilde{\Pi}}
\newcommand{\Kt}{\widetilde{K}}
\newcommand{\At}{\tilde{A}}
\newcommand{\bt}{\tilde{b}}
\newcommand{\Rt}{R^\textup{opt}}
\newcommand{\RLin}{{R}_{\hbox{\footnotesize Lin}}}
\newcommand{\RLinopt}{{R}^\textup{opt}_{\hbox{\footnotesize Lin}}}

\newcommand{\zt}{\tilde{z}}
\newcommand{\cR}{{R}}
\newcommand{\cRt}{\widetilde\cR}
\newcommand{\bbA}{K}

\newcommand{\Apert}{\widetilde{\bbA}}
\newcommand{\phipert}{\phi_{(\bbA, \Apert)}}
\newcommand{\Ft}{\widetilde{F}}
\newcommand{\ft}{\tilde{f}}
\newcommand{\htilde}{\tilde{h}}
\newcommand{\alphaup}{\alpha^\textup{up}}
\newcommand{\alphadown}{\alpha^\textup{down}}
\newcommand{\Dt}{h}

\newcommand{\order}{{\mathcal O}}
\newcommand{\rmax}{r_\textup{max}}
\newcommand{\rmin}{r_\textup{min}}

%%%%%%%%%%%%%%%%%%%% 

\DeclareMathOperator{\sign}{sign}

 \journalname{Journal of Scientific Computing}

\begin{document}

\title{Optimal monotonicity--preserving perturbations of a given Runge-Kutta method\thanks{The first author was supported by Ministerio de Econom{\'i}a y Competividad, Spain, Projects MTM2014-53178-P and MTM2016-77735-C3-2-P. The second and third authors were supported by KAUST Award No. FIC/2010/05-2000000231. The third author was also supported by T\'AMOP-4.2.2.A-11/1/KONV-2012-0012: Basic research for the development of hybrid and electric vehicles,  supported by the Hungarian Government and co-financed by the European Social Fund.}}
\author{Inmaculada Higueras  \and David I. Ketcheson	\and Tiham\'er A. Kocsis}

\institute{
I. Higueras \at Public University of Navarre, Pamplona 31006, Spain. \email{higueras@unavarra.es}  
 \and D.I. Ketcheson \at King Abdullah University of Science and Technology (KAUST), Thuwal 23955-6900, Saudi Arabia. \email{david.ketcheson@kaust.edu.sa}
 \and T.A. Kocsis \at Sz\'echenyi Istv\'an University, Gy\H{o}r, H-9026, Hungary.  \email{katihi@sze.hu}}

\date{\today}

\maketitle

\begin{abstract}
Perturbed Runge--Kutta methods (also referred to as downwind Runge--Kutta methods)
can guarantee monotonicity preservation under larger step sizes relative to their traditional
Runge--Kutta counterparts.  In this paper we study the question of how
to optimally perturb a given method in order to increase the radius of absolute monotonicity (a.m.).
We prove that for methods with zero radius of a.m., it is always possible to give
a perturbation with positive radius. We first study methods for linear problems and
then methods for nonlinear problems.  In each case, we prove upper bounds on the radius 
of a.m., and provide algorithms to compute optimal perturbations.
We also provide optimal perturbations for many known methods. 
\end{abstract}

\keywords{
 Strong Stability Preserving, Monotonicity, Runge-Kutta methods, time discretization 
} 
 
\subclass{65L06, 65L20, 65M20}

\section{Introduction}
In this work we are concerned with the numerical solution
of initial value ordinary differential equations:
\begin{align}
u'(t) & = f(u (t))\, , & u(0) = u_0\, . \label{ode}
\end{align}
In many physical problems $f$ is dissipative, i.e.
the exact solution satisfies
\begin{align} \label{dissipative}
\frac{d}{dt}\|u (t)\| \le 0,
\end{align}
where $\|\cdot\|$ denotes a convex functional (e.g., a norm or semi--norm). 
A sufficient condition for \eqref{dissipative} is that $f$ be monotone under an
explicit Euler step:
\begin{align} \label{FEcond1}
	\| v + h f(v)\| & \le \|v\|, & \mbox{for all $v$, and for $h$ satisfying } 0 \le h \le h_0,
\end{align}
where $h_0>0$ (in general $h_0$ may depend on $v$).  We refer to
\cite[p.~1-2]{higueras2005a}  and \cite[p.~501]{Kraaijevanger1991} for details.\footnote{
Although the results in  \cite{Kraaijevanger1991} are given in the
context of contractivity, they are also relevant to the preservation of monotonicity. In
\cite[Thm. 5.1]{Kraaijevanger1991}, quotients $m_\tau[x,y]$ and one-sided
Gateaux variations ($m_+[ x,y]$, $m_-[ x,y]$),  are used for $x=u-\tilde u$ and
$y=f(u)-f(\tilde u)$ to obtain the contractivity property $\|u(t)-\tilde
u(t)\|\leq \|u(t_0)-\tilde u(t_0)\|$ for $t\geq t_0$. In the context of monotonicity,
we simply take $x=u$ and $y=f(u)$ to obtain the monotonicity
property $\|u(t)\|\leq \|u(t_0)\|$ for $t\geq t_0$.}

Let $u_{n}, u_{n+1}$ denote approximations, computed by some numerical integrator,
to the solution at successive time steps $t_{n}$ and $t_{n+1}=t_{n}+h$.
Under the forward Euler monotonicity condition \eqref{FEcond1}, 
it is possible to prove 
that many Runge--Kutta and linear multistep methods also give monotone
solutions; i.e., solutions that satisfy
\begin{align} \label{monot}
	\|u_{n+1}\| & \le \|u_{n}\|, & \mbox{for $h$ satisfying } 0 \le h \le R \, h_0.
\end{align}
Such methods are known as  strong stability preserving (SSP) methods, and
the factor $R$ is known as the radius of absolute monotonicity or SSP coefficient
of the method.
SSP methods necessarily have non-negative coefficients, since the monotonicity
property is proved using \eqref{FEcond1} and convexity.
Results on numerical preservation of some other properties,
like non-negativity \cite{Horvath2005} or discrete maximum-principle \cite{zhang2010}, can also be obtained in the SSP framework.

Monotonicity cannot be ensured using only assumption \eqref{FEcond1} for
methods with negative coefficients \cite[Thm. 4.2]{Kraaijevanger1991}, or even
for some methods (such as the classical fourth-order Runge--Kutta method) with
non-negative coefficients \cite[Thm. 9.6]{Kraaijevanger1991}.
However, in some problems (such as those of Section \ref{sec:example} below)
it happens that $f$ satisfies property \eqref{FEcond1} also for negative
step sizes.  In other cases, $f$ is a dissipative approximation of a 
conservative operator, in which case one may devise a second approximation
$\ft$ that is dissipative for negative step sizes; i.e.
\begin{align} \label{FEcond2}
	\| v - h \tilde f(v)\| & \le \|v\|, & \mbox{for all $v$, and for $h$ satisfying } 
    										0 \le h \le \tilde{h}_0, 
\end{align}
where $\tilde{h}_0>0$. This situation arises naturally in the context of hyperbolic PDE
semi-discretizations, where $f$ is upwind-biased and $\ft$ is downwind-biased;
typically $\tilde{h}_0 = h_0$. 

The function $\ft$ is to be used in place of $f$ wherever 
a negative coefficient  appears in the time integration method, in order to ensure 
monotonicity of the overall method.  Introduction of $\ft$
makes it possible to ensure monotonicity for a broader class of methods,
including the classical Runge--Kutta method of order four.  It also makes it possible
to ensure monotonicity for many methods under larger step sizes.

During the last quarter century, a number of additional authors have studied 
monotonicity for methods that use $\ft$ (see e.g., 
\cite{shu1988,gottlieb1998total,ruuth2004,higueras2005a,higueras2006,ruuth2006,gottlieb2006,DoHiMa,Higueras2010,ketcheson2011a}).
The main motivation for this work has been to
break the  ``order barrier'' that restricts explicit Runge--Kutta methods to order
four and to find new methods with larger SSP coefficient \cite{shu1988,gottlieb1998total,ruuth2004,higueras2005a,higueras2006,ruuth2006,gottlieb2006,ketcheson2011a}),  or  to explain why some non-SSP
methods preserve strong stability properties like non-negativity and a discrete
maximum principle \cite{DoHiMa,Higueras2010}.
In this context, numerical optimization of the SSP coefficient for Runge--Kutta methods with negative
coefficients was conducted for explicit methods in \cite{ruuth2004,ruuth2006,gottlieb2006} 
and for implicit methods in \cite{ketcheson2011a}.  
In each case, optimization was
carried out over methods with a specified order and number of stages.

 Methods that use both $f$ and $\ft$ can naturally be viewed as
{\em perturbed Runge--Kutta methods}. Although they are also connected to
additive Runge--Kutta methods (see \cite{higueras2005a,higueras2006}),
in the present work we will employ the perturbation viewpoint,
and refer to methods that use downwind discretization as perturbed Runge--Kutta methods.

\subsection{Perturbed Runge--Kutta methods}
A Runge-Kutta method applied to the initial value problem \eqref{ode}
computes approximations $u_n \approx u(t_n)$ by
\begin{subequations} \label{Runge--Kutta}
\begin{align}
    Y & = u_n e + h \bbA F\, ,  \\
    u_{n+1} & = Y_{s+1}.
\end{align}
\end{subequations}
Here $s$ is the number of stages, $e$ is a vector whose entries are equal to one, $Y$ is the vector containing the stage values and the numerical solution, $Y=(Y_1, \ldots, Y_s, Y_{s+1})^t$,  
$[F]_i = f(Y_i)$, and $K$ is the $(s+1)\times(s+1)$ matrix of Butcher
coefficients:
$$\bbA=\begin{pmatrix} A & 0 \\ b^t & 0 \end{pmatrix}\, . 
$$
In this work we study perturbations of a Runge--Kutta method $\bbA$ to solve problem \eqref{ode}. 
To define a perturbed method, we introduce a second coefficient matrix
$$\Apert=\begin{pmatrix} \tilde A & 0 \\ \tilde b^t & 0 \end{pmatrix}\, ,
$$
where the matrix $\tilde A$ has the same structure
(strictly lower-triangular, lower-triangular, or full)
as the matrix $A$.  We also introduce
a function $\ft$ such that $\ft\approx f$.
We assume that $f$ and $\ft$ satisfy the explicit Euler assumptions 
\eqref{FEcond1} and \eqref{FEcond2}, respectively, with $\htilde_0 = h_0$.
\begin{definition} \label{def:pertRK}
A {\it perturbed Runge--Kutta method} $(\bbA, \Apert)$ takes the form
\begin{subequations} \label{PRunge--Kutta}
\begin{align}
    Y & = u_n e + h \bbA F + h\Apert(F-\Ft)\, ,  \\
    u_{n+1} & = Y_{s+1},
\end{align}
\end{subequations}
where $[\Ft]_i=\tilde f(Y_i)$.  
\end{definition}
As far as we know, the first attempt to perturb a given Runge--Kutta
method to obtain non-trivial SSP coefficient was made in \cite{shu1988},
where the classical fourth--order  Runge--Kutta scheme is perturbed to obtain a
non-trivial SSP coefficient. The goals of this paper are to perform a rigorous
study of perturbed Runge--Kutta methods and propose algorithms to obtain
perturbations of a given Runge-Kutta method with optimal SSP coefficient.

 \begin{remark} {\it (Perturbed methods as additive schemes)}\label{rem:additive}
Observe that method \eqref{PRunge--Kutta}
may be viewed as approximating the solution of the perturbed problem
$$u'(t) = f(u) + (f(u) - \ft(u)),$$
where $\ft\approx f$,  with the additive method $(\bbA, \Apert)$.\hfill $\square$
 \end{remark}
 
\subsubsection{Relation between the unperturbed method and the perturbed method}
The present work is based on the premise that the behavior of a perturbed method
$(\bbA, \Apert)$ is related to the properties of the unperturbed method with coefficients
$\bbA$.  To see why this is the case, observe first that the perturbed method
\eqref{PRunge--Kutta} reduces to the Runge--Kutta method 
\eqref{Runge--Kutta} when  $\ft = f$.  Furthermore, as $\ft \to f$, the
perturbed method solution (given by \eqref{PRunge--Kutta}) obviously tends to the
unperturbed method solution (given by \eqref{Runge--Kutta}). 
In practice, for hyperbolic
problems, $\ft$ and $f$ are discretizations of a spatial differential operator \cite[p.144]{shu1988}, and
their difference can be made arbitrarily small by increasing the accuracy of these
discretizations.   As far as convergence is concerned, the reasoning in \cite[p. 933-934]{higueras2005a} shows that, for stable Runge--Kutta methods,  the perturbed method retains the order of the unperturbed one, provided that  $f-\ft$ is small enough. 
 Herein we are particularly interested in high-order time discretizations,
intended to be paired with high-order spatial discretizations, for which the difference
$f-\ft$ is very small. 

Given the close relationship between the perturbed method and its
unperturbed counterpart, it makes sense to consider developing perturbed
versions of existing methods, in order to take advantage of the substantial
amount of work that has gone into designing those methods. 

\subsection{Two motivating examples}\label{sec:example}
To demonstrate the usefulness of the present work, we consider two
numerical experiments.   In both, the Runge--Kutta methods are used
in the standard way, and an equivalent reformulation allows us to analyze their behavior using the
formalism of  perturbed  schemes with $\ft=f$; in other words, 
 the Runge--Kutta methods are perturbed fictitiously.

The first example shows how this work can better be used to
{\em understand the behavior of standard (unperturbed) Runge--Kutta methods}.
The second one shows that  ``perturbing" a  robust  Runge--Kutta method  with many important features  can be advantageous
versus using an optimized SSP perturbed method.

\subsubsection{Example 1}
We integrate the initial value problem
\begin{equation}
u'(t) = \sign(\sin(t))u(t)(1-u(t))\label{ExamPos}
\end{equation}
on the interval $t\in[0,100]$ with initial condition $u(0)\in(0,1)$.
The true solution remains in the interval $(0,1)$, and the explicit Euler method
keeps the solution in this interval if the step size satisfies
$-1 \le h < 1$  (note that negative step sizes are included here).  
We will apply some well-known Runge--Kutta methods to this problem
and consider two initial values:
$u(0) = 10^{-8}$ and $u(0)=1-10^{-8}$ (these values are
chosen because initial values very close to zero or unity are the
most challenging; testing other initial values in $[0,1]$ does not seem to
change the results found below).

We first consider the explicit midpoint method:
\begin{align*}
    y_1 & = u_n\,,  \\
    y_2 & = u_n+ \frac{h}{2} f(y_1)\, ,  \\
    u_{n+1} & = u_n + h f(y_2).
\end{align*}
For this method, the formula for $y_2$ is an Euler step, but the formula for
$u_{n+1}$ cannot be written
as a convex combination of forward Euler steps, so the standard theory
of strong stability preservation does not guarantee invariance of the interval
$(0,1)$ under any step size.  Nevertheless, experimentally we observe that
the interval is preserved for step sizes up to $h\approx 0.73$ (see Table  \ref{example1}).

The theory in the present paper explains this result rather precisely.
Since $f$ satisfies both \eqref{FEcond1} and \eqref{FEcond2}  for $h_0=\tilde h_0=1$, we can
formally introduce a function $\ft=f$ to facilitate the analysis.
We thus ``perturb" the midpoint method, replacing the formula for $u_{n+1}$ with  the equivalent expressions
\begin{align}
    u_{n+1} & = u_n  + h f(y_2) +  h \, \frac{r}{2} \, \left(f(y_1) - \ft(y_1)\right) \label{midpoint_int1} \\
            & =   r \left(y_2 + \frac{h}{r}f(y_2)\right)+ (1-r)\left(y_1 - \frac{h}{r}\ft(y_1) \right) \label{midpoint_int2}
\end{align}
where $r = \sqrt{3}-1$.  The last formula above shows that $u_{n+1}$ can be
written as a convex combination of forward Euler steps with step size $h/r$,
one using $f$ and one using $\ft$.  Of course, since $\ft=f$ this is in fact
the same midpoint method, but writing it this way allows us to prove that it
preserves the interval $(0,1)$ for step sizes up to $\sqrt{3}-1 \approx 0.73$. The 
perturbed forms \eqref{midpoint_int1} and \eqref{midpoint_int2} correspond to
expressions \eqref{PRunge--Kutta} and \eqref{dwRunge--Kutta_cso}, respectively,
for the explicit midpoint method (see \eqref{midpointRKpert}). 

Results for some additional methods are given in Table \ref{example1}.
The value $R(K)$ is the SSP coefficient, which is also the theoretical maximum step size
for preservation of the interval $(0,1)$ that can be guaranteed based
on considering only condition \eqref{FEcond1}.  The value $h_\text{obs}$
is the largest step size (truncated to 2 decimal places) observed to preserve
the invariant interval
in practice.  Finally, the value $\Rt(K)$ gives the step size that
can be guaranteed to preserve the interval using the tools developed in
the present work, by finding an optimal perturbation.  The values
$\Rt(K)$ do a much better job of predicting (or explaining) the
behavior of the methods for this problem.

\begin{table}
\begin{center}
\begin{tabular}{l|ll|l}
\toprule
Method & $R(K)$ & $h_\text{obs}$ & $\Rt(K)$ \\
\midrule
Forward Euler & 1 &      1.00 &          1 \\
Midpoint RK2  & 0 &      0.73 &      0.732 \\
Heun33 \cite{heun1900neue} & 0 & 0.91 &      0.776 \\
RK4 (Kutta)          & 0 & 1.24 &      0.685 \\
Merson \cite{merson1957operational} & 0 & 0.29 &      0.242 \\
\bottomrule
\end{tabular}
\caption{Theoretical  ($R(K)$)  and observed  ($h_\text{obs}$)  step sizes for preserving the invariant
interval $(0,1)$ for problem \eqref{ExamPos}.  Values in the last column  ($\Rt(K)$)  are obtained using the tools presented herein.\label{example1}}
\end{center}
\end{table}

\subsubsection{Example 2}
 We integrate the problem 
\begin{align} \label{lyproblem}
    u'(t) = 5 \, u\, (1-u)\left(u-\frac{1}{2}\right)
\end{align}
with initial condition $u(0)=0.49$ up to time $T=10$.  This problem was
proposed in the context of hyperbolic PDEs in \cite{levequeyee1990} and
has been studied extensively.  The values $u=0$ and $u=1$ are stable equilibria
while the value $u=1/2$ is unstable; the exact solution remains in the interval
$[0,1]$.  It can be shown that the forward Euler method
preserves the interval $u\in[0,1]$ for step sizes in the approximate range
$-16/5 \le \Delta t \le 2/5$   (note that negative step sizes are included here)  \cite[Lemma 8.1]{DoHiMa}. 

 We consider two methods:
the 5th order Bogacki-Shampine method \cite{Bogacki1996} (BS75)  and the optimized 5th order 7-stage
SSP perturbed method  (SSP75) \cite{ruuth2004}.   Method BS75 has negative coefficients,
so traditional SSP theory does not guarantee invariance of $[0,1]$ under any non-zero step size; using
the approach in this paper it can be proven to preserve this interval for step sizes up
to approximately $0.125$.  The SSP75 method can be proven to preserve this interval
for step sizes up to approximately $0.558$.  The  values  $0.125$ and $0.558$ are obtained from  \eqref{eq:stepsize_r}  for $h_0=2/5$ and the data in Table \ref{tbl:explicit-methods1}.

In Figure \ref{comparison} we plot the error versus the time step for each method.
Open circles indicate solutions that contain values outside the interval $[0,1]$.
Notice that the BS75 method performs better both in terms of accuracy and strong
stability preservation. 

\begin{figure}\begin{center}\includegraphics[scale=0.6]{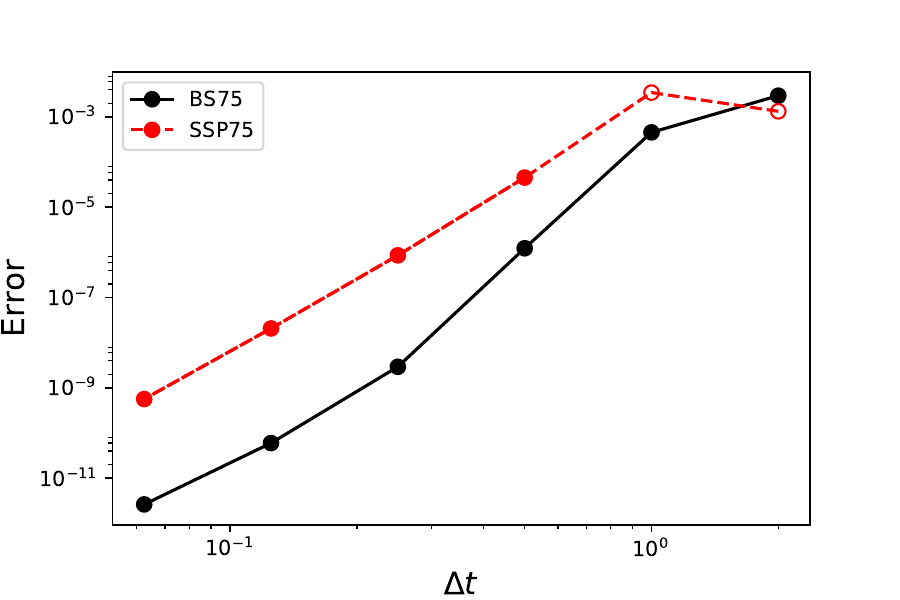}
\caption{Accuracy of BS75 and SSP75 methods for \eqref{lyproblem}.  Open circles
indicate the presence of negative solution values.}
\label{comparison}
\end{center}
\end{figure}

Recall that the scheme SSP75 has been optimized to achieve the largest
SSP coefficient but many other relevant properties are not taken into account.
However, the Bogacki-Shampine method has been carefully constructed to optimize
several properties, including accuracy.
The difference in the error constants for Bogacki-Shampine and SSP75 methods,
approximately $2.2\times 10^{-5}$ and  $2.7\times 10^{-3}$, respectively,
explains the observed accuracies.  Clearly, a method optimized for properties
other than the SSP coefficient may be useful even for problems where strong
stability properties are paramount. 

 Production implementations of modern IVP solvers include many
 important features, such as continuous output, error estimation, and automatic
 step size control \cite{HaWa}. The BS75 method, for instance, includes all of these
 features.  None of these have been developed for existing high-order
 optimal downwind SSP methods, so such methods may not be a reasonable option
 when an efficient and robust solution is required.  By instead using a
 perturbation of an existing method, all of these features can be used in the
 usual way.

\subsection{Scope and outline}\label{questions}
In the present work we seek to answer the following questions:
\begin{enumerate}
        \item Can every Runge--Kutta method be perturbed in a way that yields
                a positive radius of absolute monotonicity? \label{Q1}
        \item What {\em a priori} limits are there on the radius of absolute monotonicity
                obtained by perturbing a given method? \label{Q2}
        \item Can optimal SSP methods from the literature be perturbed in order
                achieve an even larger radius of absolute monotonicity? \label{Q3}
        \item Given a fixed Runge--Kutta method, what perturbation results in the largest
                radius of absolute monotonicity for the perturbed method? \label{Q4}
        \item How can that perturbed method be found? \label{Q5}
        \item To begin with, what are the answers to the questions above if
                only linear problems are considered? \label{Q6}
\end{enumerate}
 
In Theorem \ref{thm:feasible}, we prove that the answer to question \ref{Q1} is affirmative.
Theorems \ref{prop_vpos} and \ref{Pbound} answer question 2 by providing
simple upper bounds; Theorem \ref{Pbound} implies that the answer to question \ref{Q3}
is negative for most known optimal SSP methods.
In Section \ref{sec:algorithms} we answer questions \ref{Q4} and \ref{Q5} by
giving two algorithms for computing optimal perturbations.  The first is
provably correct but approximate, while the second is heuristic but exact and
agrees with the first in all cases we have tested.  Both are applicable only to
explicit methods.  These algorithms have been implemented in the free
open-source software package Nodepy \cite{nodepy} and can easily be applied
to any desired method.
We conclude Section \ref{sec:nonlinear} with an application of the theorems and
algorithms to optimal perturbations of some Runge--Kutta methods from the literature  and a numerical test.
Among the results is the first truly optimal perturbation for the classical
4th-order method of Kutta.

We deal with application to linear problems first, in Section \ref{sec:lin:prob}.
For explicit methods applied to linear problems, the questions above can be
cast in terms of absolute monotonicity of the (bivariate) stability polynomial.
In Section \ref{sec:upperbound:lin} we prove a general upper bound on the radius
of absolute monotonicity of the stability polynomial of an explicit perturbed Runge--Kutta method
with $s$ stages and linear order $p$.
 In Section \ref{sec:upperlin:num}, we provide an algorithm for
computing tighter bounds, and tabulate some of the resulting numerical values. 
Examples of optimal methods are given in Section \ref{sec:lin-examples}.

Section \ref{sec:conclusions} contains some conclusions as well as some open
questions to be studied in the future.  

In Section \ref{sec:proofs} we give  the  proofs of the results in the paper together with an auxiliary lemma. We  have collected them in a separate section in order to not interrupt the reading of the paper. 

Finally, in the Appendix we give some
details on perturbations for the family of second order 2-stage methods and the
classical fourth order Runge--Kutta method.  

Computer code to reproduce some of the examples in this paper, including the two
examples above, can be found at  \cite{HigKetKocRepository}. % \url{https://github.com/ketch/optimal_splittings_RR}.

\section{Explicit perturbed Runge--Kutta methods for linear problems}\label{sec:lin:prob}

To study the behavior of the perturbed Runge--Kutta method $(\bbA,\Apert)$ 
for linear problems, we apply it to a linear scalar test problem,
setting $f(u)=\lambda u$ and $\ft(u)=\lambdapert u$ in \eqref{PRunge--Kutta}.
This results in the iteration $$u_{n+1} = \phipert(z, -\zt) \, u_n,$$
where $z=h\lambda$, $\zt=h\lambdapert$ and
\begin{align}
\phipert(z, \zt)= 1 +  \left(z b^t + (z+\zt) \bt^t\right)\left(I -  z A - (z+\zt) \At \right)^{-1} e\, . \label{bistabfunc}
\end{align}
\begin{definition}
We refer to \eqref{bistabfunc} as the {\it stability function of the perturbed
Runge--Kutta method} $(\bbA,\Apert)$.   
\end{definition}

Following the steps in \cite[Prop. 3.2]{HaWa},  function \eqref{bistabfunc} can also be written as
\begin{align}
 \phipert(z, \zt)= \frac{\hbox{det} \left(I- z (A -e b^t) - (z+\zt) (\At -e \bt^t) \right)}{\hbox{det}  \left(I - z A - (z+\zt) \At\right)}\, .   \label{bistabfuncDet}
\end{align}

The stability function $\phi$ in \eqref{bistabfunc} is a rational function $\psi=P/Q$, where $P$ and $Q$ are polynomials in the complex variables $z$ and $\tilde  z$, both with real coefficients. A function $\psi$ of this type is said to be absolutely monotonic (a.m.) at a given point 
 $(\xi, \tilde \xi)\in \mathbb{R}^2$ if $Q(\xi, \tilde \xi)\neq 0$ and  $(d^{j+k}\psi/dz^jd\tilde z^k) (\xi , \tilde \xi)\geq 0$, $j = 0, 1, \ldots$, $k = 0, 1, \ldots$  (see, e.g., \cite[Def. 2.7]{higueras2006}).  
 
This definition is an extension of the one given in \cite[Def. 2.1]{Kraaijevanger1991} for the unidimensional case: given a rational function $\psi=P/Q$, where $P$ and $Q$ are polynomials in the complex variable $z$, both with real coefficients, we say that $\psi$ is absolutely monotonic (a.m.) at a given point 
  $\xi\in \mathbb{R}$ if $Q(\xi)\neq 0$ and all the derivatives $(d^k\psi/d z^k)(\xi)\geq 0$, $k=0, 1, 2, \ldots$

\begin{definition}
Given a function $\psi(z, \zt)$, we define the {\it radius of absolute monotonicity}  as 
\begin{align}
R(\psi)=  \sup\left\{ r\in \mathbb{R} \, | \,  r=0, \hbox{ or }  r>0\, , \, \hbox{and }
\psi(z, \zt) \text{ is a.m. at } (-r, -r) \right\}\, . \label{threshold:pert}
\end{align}
\end{definition}
Observe that, if $\psi(z, \zt)$ is a bivariate polynomial of combined degree $s$, for $r \le R(\psi)$  we can write 
\begin{align} \label{dwtaylor1}
\psi(z, \zt) = \sum_{j=0}^s \sum_{\ell=0}^j \gamma_{j\ell} 
  \left(1+\frac{z}{r}\right)^{j-\ell} \left(1+\frac{\zt}{r}\right)^{\ell}\, , 
  \quad \quad \mbox{with } \gamma_{j\ell} = \frac{r^j}{j!}\frac{\partial^j\psi}{\partial z^{j-\ell} \partial \zt^{\ell}}(-r, r),
\end{align}
where the coefficients $\gamma_{j\ell}$ are non-negative. 

 \begin{definition} Given a perturbed Runge--Kutta method \eqref{PRunge--Kutta} with coefficients $(\bbA, \Apert)$,
 we define the {\it threshold factor}  $\RLin(\bbA, \Apert)$ as the radius of absolute monotonicity of its stability
 function:
\begin{align}
 \RLin(\bbA, \Apert)=R(\phipert)\, .  \label{eq:thersfact}
 \end{align}
 \end{definition}
 
 The quantity $\RLin(\bbA,\Apert)$ is referred to as the {\it threshold factor} 
due to its role in the step size for monotonicity.  
The following
theorem appeared previously as \cite[Thm. 4.6.2]{ketchesonphdthesis}. Its proof, given in Section \ref{sec:proofs}, is based on the fact  that, for explicit schemes,  the stability function $\phipert(z,\zt)$ is a bivariate polynomial of combined degree $s$ and thus, for $r \le \RLin(\bbA,\Apert)$,  
it can be written in the form \eqref{dwtaylor1}
with non-negative coefficients $\gamma_{j\ell}$.
\begin{theorem}\label{Theorem:1}
Let a consistent perturbed $s$-stage explicit Runge--Kutta method $(\bbA, \Apert)$
be given with stability function $\phipert$,
and let $\|\cdot\|$ be a convex functional.
Consider the numerical solution
\begin{equation}
u_{n+1}=\phipert(h L, -h \tilde{L}) \, u_n\, , \label{eq:linODE}
\end{equation}
where $L$ and $\tilde L$ are linear operators such that $L \tilde{L}= \tilde{L} L$ and 
\begin{align*}
\|I + hL\|  \le 1\, ,   \quad 		   
\|I - h\tilde{L}\|   \le 1\, , \qquad    0   \le h \le h_0\, .
\end{align*}
Then the numerical solution \eqref{eq:linODE} satisfies the monotonicity condition \eqref{monot} for
step sizes
$$0 \le h \le \RLin(\bbA,\Apert) \, h_0.$$
\end{theorem}

Consequently, the larger $\RLin(\bbA,\Apert)$ is, the larger is the step size restriction for montonicity.
For a given Runge--Kutta method \eqref{Runge--Kutta} with coefficients $\bbA$,
we are interested in determining perturbations $\Apert$ that give the largest 
threshold factor. 

\begin{definition}
The {\it  threshold factor of the optimal perturbation} is
given by
\begin{align} \label{opt-thresh-fact}
  \RLinopt(\bbA)=  \sup_{\Apert} \RLin(\bbA, \Apert)\, ,
\end{align}
where, in order to preserve the explicit nature of the method, the supremum in \eqref{opt-thresh-fact} is taken over all strictly lower
triangular matrices $\Kt$.
A perturbation ${\Apert}$ such that 
$$\RLin(\bbA, \Apert) = \RLinopt(\bbA)\, ,  $$
will be called an {\it optimal perturbation of the method $\bbA$ for the linear
problem}.  
\end{definition}

Taking $\Apert=0$ gives a (not perturbed) Runge--Kutta method \eqref{Runge--Kutta} and a (not perturbed) stability function $\phi_\bbA$.
In this case we denote the threshold factor $\RLin(\bbA, 0)$ simply by $R(\phi_\bbA)$.
Clearly
\begin{align}R(\phi_{\bbA}) \le \RLinopt(\bbA)\, .\label{bound:RRL}
\end{align}
In the next section, we give upper bounds on $\RLinopt(\bbA)$.

\subsection{Upper bounds on the threshold factor for optimal perturbations} \label{sect:opt-dw-tf}
In this section we consider the set $\Pit_{s,p}$, with $p\le s$, defined as follows. 
\begin{definition}
We define   $\Pit_{s,p}$, with $p\le s$, as the set of bivariate polynomials with
the following properties:
\begin{enumerate}
    \item $ \psi(z,\zt) = \displaystyle \sum_{j=0}^p \frac{z^j}{j!} 
            + \sum_{j=p+1}^s \sigma_j z^j + (z+\zt)\, \Psi(z,\zt) $;
    \item $\Psi$ is a polynomial of combined degree at most $s-1$.  
\end{enumerate}
\end{definition}
Observe that if $\psi(z,\zt)\in \Pit_{s,p}$, then 
\begin{align}
\psi(z,-z)= \exp(z)+ \order(z^{p+1})\, .\label{eq:approx_exp}
\end{align}
The following result  explains the interest in studying the set $\Pit_{s,p}$.
\begin{proposition}\label{Proposition:1}
Let $\bbA$ be an explicit  $s$-stage Runge--Kutta method with linear order $p$. If $\phi_{(\bbA,\Apert)}$ is the stability function of the perturbed   Runge--Kutta method $(\bbA,\Apert)$, then 
 $\phi_{(\bbA,\Apert)} \in \Pit_{s,p}$.
\end{proposition}

The aim of this section is to investigate
\begin{align}\cRt_{s,p}=\sup\left\{R(\psi) \, |\, \psi(z, \zt)\in\Pit_{s,p}\right\}\, .\label{def:rsp}
\end{align}
Clearly,  by Proposition \ref{Proposition:1}, $\cRt_{s,p}$ is an upper bound of the threshold factor of the optimal perturbation defined by \eqref{opt-thresh-fact} (see too \eqref{eq:thersfact}),
\begin{align}\RLinopt(\bbA)\leq \cRt_{s,p}\, . \label{bound:RLRsp}
\end{align}

\begin{remark} \label{rem:realization} {\it (Realizable polynomials)}
We remark that not all polynomials in $\Pit_{s,p}$ can
be realized as the stability function of an $s$-stage perturbed Runge-Kutta
method \eqref{PRunge--Kutta}.
Thus, inequality \eqref{bound:RLRsp} is often strict (see Example \ref{ex:2eo2} below). In case the optimal polynomial is
realizable, the corresponding method may be
of interest for the integration of linear systems.\hfill $\square$
\end{remark}

The rest of the section is organized as follows. In Subsection \ref{sec:upperbound:lin} we give an upper bound for
$\cRt_{s,p}$.  In Subsection \ref{sec:upperlin:num}, we give an algorithm to
compute, $\cRt_{s,p}$ for given $s$ and $p$, along with numerical values.

\subsubsection{Upper bound on ${\cRt_{s,p}}$}
\label{sec:upperbound:lin}

The following upper bound on $\cRt_{s,p}$ is proved in Section \ref{sec:proofs}.

\begin{theorem}\label{Theorem:2} The coefficient $\cRt_{s,p}$ defined by \eqref{def:rsp} has the following upper bound
\begin{align}\cRt_{s,p}\leq \sqrt[p]{s(s-1)\cdots (s-p+1)}\, .  \label{incRtsp}
\end{align}
\end{theorem}

Consequently, from \eqref{bound:RLRsp}, we obtain the following bound for the threshold factor of the optimal perturbation 
\begin{align*}\RLinopt(\bbA)\leq \sqrt[p]{s(s-1)\cdots (s-p+1)}\, .  \label{upper:RLinopt}
\end{align*}

\subsubsection{Numerical computation of  $\cRt_{s,p}$}\label{sec:upperlin:num}
In this section we provide a means to compute tighter values of $\cRt_{s,p}$ using linear programming.
The material in this section closely follows \cite[Sect. 4.6.2]{ketchesonphdthesis}.

In order to obtain these bounds, for each $(s,p)$ we are going to construct functions $\psi(z,\zt)\in \Pit_{s,p}$ that  can be written in the form \eqref{dwtaylor1} for some $r>0$ with non-negative coefficients    $\gamma_{j\ell}$. Observe that these polynomials can be constructed if $\gamma_{j\ell}$ and $r$ are given.  
From \eqref{dwtaylor1}, after considerable manipulation we find that 
$\psi(z,-z)=\sum_{i=0}^s C_i z^i$ where
\begin{align*}
C_i(r,\gamma) = \sum_{j=i}^s \sum_{\ell=0}^j \gamma_{j\ell} \sum_{m=\max(0,i-\ell)}^{\min(i,j-\ell)} 
{j-\ell \choose m}{\ell \choose i-m} \frac{(-1)^{i-m}}{r^i}\, , 
\end{align*}
where $\gamma$ is a vector whose components are the coefficients $\gamma_{j,\ell}$.

Hence we have the following problem for existence of a polynomial
\eqref{dwtaylor1} with perturbed threshold factor at least $r$ and order at least $p$:
\begin{subequations}
\label{optprob3}
\begin{align}
\mbox{Given } r>0, \mbox{ find $\gamma$ such that} & \nonumber \\
& \gamma_{j\ell} \ge 0 & 0 \le \ell\le j \le s \\
& C_i(r,\gamma) = \frac{1}{i!} & 0 \le i \le p.
\label{eqconstr}
\end{align}
\end{subequations}
Since \eqref{eqconstr} is a system of linear equations (in $\gamma$)
then for any given value of $r$ \eqref{optprob3} represents a linear programming
feasibility problem.  Hence we can use bisection and an LP solver to find 
the largest value of $r$ satisfying \eqref{optprob3},  as was done for similar problems in \cite{ketcheson2008,ketcheson2009a}.  
Table \ref{dwtable} gives the computed values of $\cRt_{s,p}$ for $s$ and $p$ up to ten.

\begin{table}
\begin{center}
\begin{tabular}{|l|rrrrrrrrrr|}
\hline
s p  & 1 & 2 & 3 & 4 & 5 & 6 & 7 & 8 & 9 & 10 \\ \hline
1  & 1.00  & &  &  &  &  &  &  &  &  \\
2  & 2.00  & 1.41 &  &  &  &  &  &  &  &  \\
3  & 3.00  & 2.45 & 1.60 &  &  &  &  &  &  &  \\
4  & 4.00  & 3.46 & 2.49 & 2.00 &  &  &  &  &  &  \\
5  & 5.00  & 4.47 & 3.20 & 2.94 & 2.18 &  &  &  &  &  \\
6  &  6.00  & 5.48 & 4.00 & 3.65 & 3.11 & 2.58 &  &  &  &  \\
7  & 7.00  & 6.48 & 4.86 & 4.45 & 3.88 & 3.55 & 2.76 &  &  &  \\
8  & 8.00  & 7.48 & 5.77 & 5.31 & 4.57 & 4.32 & 3.72 & 3.15 &  &  \\
9  & 9.00  &  8.49 & 6.62 & 6.22 & 5.24 & 5.02 & 4.52 & 4.14 & 3.33 &  \\
10 & 10.00 & 9.49 & 7.42 & 7.09 & 5.95 & 5.70 & 5.25 & 4.96 & 4.32 & 3.73 \\ 
\hline
\end{tabular} 
\caption{$\cRt_{s,p}$: upper bounds on $\RLinopt(\bbA)$, the threshold factors for optimal perturbations\label{dwtable}}
\end{center}
\end{table}

\subsection{Examples}\label{sec:lin-examples}  
In Section  \ref{sec:polynomials} we give some examples of  polynomials achieving ${\cRt_{s,p}}$; in Section \ref{sec:ex_optpert} we study optimal threshold factors for perturbations $\RLinopt(\bbA)$ of specified  Runge--Kutta methods $\bbA$.

\subsubsection{Polynomials achieving ${\cRt_{s,p}}$} \label{sec:polynomials}
The algorithm just described also provides coefficients for an optimal
polynomial $\psi_{s,p} (z, \zt)$, which may or may not be realizable as the stability function of
a perturbed Runge--Kutta method. Observe that all of them belong to $\Pit_{s,p}$ and thus  $\psi_{s,p} (z, -z)$ is an order $p$ approximation of $\exp(z)$ (see \eqref{eq:approx_exp}). 

By computing optimal polynomials with $p=1$ and $p=2$ we
arrived at the following results.

\begin{proposition}\label{prop:rs1}
For $p=1$ we have $\cRt_{s,1} = s$.  This value
is attained by the following polynomial in $\Pit_{s,1}$ 
$$\psi_{s,1} (z, \zt)= \left(1+\frac{z}{s}\right)^s,$$
which corresponds to performing $s$ iterated forward
Euler steps of size $h/s$.
\end{proposition}

The proposition can be proved by checking the
radius of absolute monotonicity and noticing that
it achieves the bound \eqref{incRtsp}.
Thus the optimal first-order perturbed methods for
linear problems are  the same as the optimal
unperturbed methods for linear problems.
\begin{proposition}\label{prop:rs2}
For $p=2$ we have  $\cRt_{s,2} = \sqrt{s(s-1)}$.  This value
is attained by the following polynomial in $\Pit_{s,2}$
\begin{align} \label{optpoly2}
\psi_{s,2} (z, \zt)& = \frac{2(s+r)-1}{2(s+r)}\left(1+\frac{z}{r}\right)^s + 
\frac{1}{2(s+r)}\left(1+\frac{\zt}{r}\right)^s,
\end{align}
 where $r=\cRt_{s,2}$.
\end{proposition}
Again, the proposition can be proved by checking the
radius of absolute monotonicity and noticing that
it achieves the bound \eqref{incRtsp}.

Some of the other optimal polynomials also have rational coefficients.
Two optimal degree-four fourth order polynomials we found are 
\begin{align*}
\psi_{4,4}^{1} (z, \zt)& = \frac{1}{3}\left(1+\frac{z}{r}\right)^2 + 
\frac{17}{48}\left(1+\frac{z}{r}\right)^4 +
\frac{14}{48}\left(1+\frac{z}{r}\right)^2\left(1+\frac{\zt}{r}\right)^2 +
\frac{1}{48}\left(1+\frac{\zt}{r}\right)^4\, , 
\end{align*}
and
$$ \psi_{4,4}^{2}(z, \zt)= \frac{7}{16} \left(1+\frac{z}{r}\right)^4+\frac{3}{8}  \left(1+\frac{z}{r}\right)^2 \left(1+\frac{\zt}{r}\right)^2+\frac{1}{6} 
   \left(1+\frac{z}{r}\right)^3 \left(1+\frac{\zt}{r}\right)+\frac{1}{48} \left(1+\frac{\zt}{r}\right)^4\, , $$
where $r=\cRt_{4,4}=2$.
Thus the optimal polynomial in $\Pit_{s,p}$ is in general not unique.

\begin{remark}
As noted already, not all polynomials of the form \eqref{dwtaylor1} can be realized as 
the stability function of a perturbed Runge--Kutta method \eqref{PRunge--Kutta} with $s$ stages.
For example, the polyomial \eqref{optpoly2} with $s=2$ is not the stability
function of any two-stage method (i.e., using only evaluations of 
$f(u_n), \ft(u_n), f(y_1), \ft(y_1)$).  
It can be realized as the stability function of a method that has three stages,
using evaluations of $f(u_n), \ft(u_n), f(y_1), \ft(y_2)$.  The difference 
in cost between such methods depends on the nature of $f, \ft$; see \cite{gottlieb2006}.
For this reason, we stress that the values
in Table \ref{dwtable} are only {\em upper bounds} on what can be achieved.
We do not pursue the topic further here.   \hfill $\square$
\end{remark}

\subsubsection{Optimal threshold factors for perturbations of specified Runge--Kutta methods}\label{sec:ex_optpert}
We have no general method for finding $\RLinopt(\bbA)$ nor a corresponding
method.  In this section we report results of some symbolic searches.
In the case of the second-order methods, due to the small number of free
parameters, it is not difficult to prove that the results below are truly
optimal.

\begin{example}\label{ex:2eo2}
We consider explicit perturbed second-order 2-stage Runge--Kutta methods
\begin{align}\begin{tabular}{c|cc}
$0$ & $0$ & $0$ \\
 $\alpha$ & $\alpha$ & $0$ \\
\hline 
$\bbA$ \phantom{\huge I}  & $1-\frac{1}{2 \alpha}$ & $\frac{1}{2 \alpha}$ 
\end{tabular}
\qquad \begin{tabular}{c|cc}
  & $0$ & $0$ \\
  & $\tilde a_{21}$ & $0$ \\
\hline
$\Apert$ \phantom{\huge I}  & $\bt_1$ & $\bt_2$ 
\end{tabular}\label{2eo2OptLin}.
\end{align}
For these methods, function \eqref{bistabfunc} can be expanded as
\begin{align}\phipert(z,\zt)= 1 + z + \frac{1}{2} z^2 + \beta_{11} z (z+\zt) + \beta_{1} (z+\zt)+ \beta_2 (z+\zt)^2\, , \label{2estord2}
\end{align}
where
\begin{align}\label{betas}
\beta_{11}= b^t \At e + \bt^t A e= \bt_2 a_{21} + b_2 \tilde a_{21} \, , \qquad  \beta_1= \bt^t e= \bt_1+ \bt_2 \, , \qquad \beta_2=\bt^t \At e=\bt_2 \tilde a_{21}\, . 
\end{align}
The polynomial \eqref{2estord2} is realizable (in the sense that it corresponds to a 2-stage Runge--Kutta method \eqref{2eo2OptLin}) if the first and last equations in \eqref{betas} can be solved for $\tilde a_{21}$ and $\bt_2$ in $\mathbb{R}$. A simple computation gives that a necessary condition is 
$\beta_{11}^2-2 \beta_2\geq 0$.  

With the help of   the symbolic computation program Mathematica, we have computed the largest $r$ such that \eqref{2estord2} is a.m. at $(-r, -r)$ and the polynomial is realizable (see  \cite{HigKetKocRepository}). We have obtained that the optimal perturbation, denoted by $\Apert_L$, satisfies $\bt_2=\tilde a_{21}=0$ and $\bt_1=\frac{1}{3} \left(\sqrt{7}-2\right)$. Note that for these values the stability function \eqref{2estord2} is independent of $\alpha$.  Furthermore, \begin{equation}\RLinopt(\bbA)=\frac{1}{3} \left(1+\sqrt{7}\right)\approx 1.21525\, . \label{RLinopt}
\end{equation}
Observe that $\RLinopt(\bbA)<  \cRt_{2,2}=\sqrt{2}$. 
The stability function \eqref{2estord2} for the  optimal perturbed method is 
$$\phit_{(\bbA, \Apert_L)}(z,\zt)= \frac{1}{9} \left(4+\sqrt{7}\right) \left(1+ \frac{z}{r}\right)^2+ \frac{1}{9} \left(5-\sqrt{7}\right) \left(1+ \frac{\zt}{r}\right) \, , \label{2estord2opt}
$$
where $r=\RLinopt(\bbA)$, the value given in  \eqref{RLinopt}. \hfill $\square$
\end{example}

\begin{example} \label{ex:4eo4} 
 We consider now perturbations of the classical fourth--order Runge-Kutta method, of the form
\begin{equation}
\begin{tabular}{c|cccc}
$0$ & $0$ & $0$ & $0$ & $0$ \\[1ex]
$\frac{1}{2}$ & $\frac{1}{2}$ & $0$ & $0$ & $0$ \\[1ex]
$\frac{1}{2}$ & $0$ &  $\frac{1}{2}$ &$0$ & $0$ \\[1ex]
$1$ & $0$ & $0$ & $1$ & $0$ \\[1ex]
\hline
$\bbA$ \phantom{\huge I} & $\frac{1}{6}$  & $\frac{1}{3}$& $\frac{1}{3}$ & $\frac{1}{6}$   
\end{tabular} \qquad \begin{tabular}{c|cccc}
 & $0$ & $0$ & $0$ & $0$ \\[1ex]
 & $0$ & $0$ & $0$ & $0$ \\[1ex]
 & $\tilde a_{31}$ & $0$ & $0$ & $0$ \\[1ex]
& $\tilde a_{41}$ & $\tilde a_{42}$ & $0$ & $0$ \\[1ex]
\hline
$\Apert$ \phantom{\huge I}  & $\bt_{1}$& $\bt_{2}$ & $0$ & $0$ 
\end{tabular}
\label{4eo4}
\end{equation}
We consider these perturbations because,
 in order to obtain a nonzero  SSP coefficient for nonlinear problems, the analysis done in \cite{higueras2005a} shows that only the entries $\tilde a_{31}$, $\tilde a_{41}$, $\tilde a_{42}$, $\bt_1$ and $\bt_2$ in $\Apert$ need be nonzero. To study SSP coefficients for the linear case, we have to analyze the perturbed stability function, that in this case  is of the form 
\begin{align}\phipert(z,\zt) = 1 + z + \frac{1}{2} z^2 +   \frac{1}{6} z^3+  \frac{1}{24} z^4+   \beta_{1} (z+\zt)   + \beta_{11} z (z+\zt)+ \beta_{21} z^2 (z+\zt)\,  \label{esfun:4eo4}
\end{align}
where
$$\beta_{1}= \bt_1+\bt_2\, , \qquad \beta_{11}= \frac{1}{6} \left(3 \,\bt_2+2 \,\tilde a_{31}+\tilde a_{41}+\tilde a_{42}\right)\, , \qquad \beta_{21}=  \frac{1}{12} \left(2 \,\tilde a_{31}+\tilde a_{42}\right)  \, . 
$$
Next, we construct the Taylor expansion of \eqref{esfun:4eo4} in terms of a general value $r$, and we compute the largest $r$ such that all the coefficients in the Taylor expansion are nonnegative; in this case, polynomial \eqref{esfun:4eo4} is always realizable (in the sense that it corresponds to a perturbation of the form  \eqref{4eo4}).
After some computations, we obtain a coefficient
$\RLin(\bbA,\Apert)\approx 1.66728$, that is the positive root of  the polynomial $15 \, x^4-4 \,x^3-12 \,x^2-24 \,x-24=0$, and the coefficients
$$\beta_{1}=\frac{ 7 \,r_0^3-2\, r_0^2-6 \,r_0-12 }{12} \, , \qquad  \beta_{11}=\frac{ 5\, r_0^2-2 \,r_0-6 }{12} \, , \qquad   \beta_{21}=\frac{r_0-1}{6}\, , 
$$
where $r_0=\RLin(\bbA,\Apert)$. With these values, the
perturbed stability function can be written as
$$\phipert(z,\zt) =
  \gamma_{01} \left(1+\frac{\zt}{r_0}\right) +\gamma_{11} 
   \left(1+\frac{z}{r_0}\right)\left(1+\frac{\zt}{r_0}\right)+
  \gamma_{21}
  \left(1+\frac{z}{r_0}\right)^2 \left(1+\frac{\zt}{r_0}\right) + \gamma_{40} \left(1+\frac{z}{r_0}\right)^4\, , 
$$
where  
$$\gamma_{01}=\frac{r_0 \left(2\, r_0^3-r_0^2-6\right)}{6} \, ,    \quad \gamma_{11}=\frac{r_0^2 \left(r_0^2+2 \,r_0-6\right)}{12} \, , \quad \gamma_{21}=\frac{r_0^3\, (r_0-1) }{6}  \, ,  \quad \gamma_{40}= \frac{r_0^4}{24} \, . $$ 
This perturbed stability function can be realized with the family of perturbations
\begin{subequations}\label{coef4eo4L}
\begin{align*}
\tilde a_{31} = \frac{1}{2} \left(2 \, r_0-2-\tilde a_{42}\right),\,   \tilde a_{41} =\frac{1}{2} \left(5  \,r_0^2-6 \,
   r_0-2-6  \,\bt_2\right),  \, \bt_1 = \frac{1}{12} \left(7  \,r_0^3-2  \,r_0^2-6  \,r_0-12-12  \,\bt_2\right). 
\end{align*}
\end{subequations}
Observe that $\RLin(\bbA,\Apert)$ is independent of the choice of $\tilde a_{42}$ and $\bt_2$.  Thus, for $\tilde a_{42}=\bt_2=0$, we obtain the same value of $\RLin(\bbA,\Apert)$ with a perturbation \eqref{4eo4} whose nontrivial elements are only in the first column of $\Apert$.

Observe too that the perturbation in \eqref{4eo4} does not contain all the possible nonnegative elements in a strictly lower triangular matrix (see Definition \ref{def:pertRK}), and therefore we cannot claim that the value $\RLin(\bbA,\Apert)\approx 1.66728$ is  the threshold factor of the optimal  perturbation $\RLinopt(\bbA)$. With the study done, we have that $1.66728 \leq \RLinopt(\bbA)\leq  \cRt_{4,4}=2$. \hfill $\square$

\end{example}

\section{Perturbed Runge--Kutta methods for nonlinear problems}\label{sec:nonlinear}

In this section we seek to answer the questions posed in 
Section \ref{questions}  for nonlinear problems. We begin with an introduction section where we collect some known results from the literature.   
 
\subsection{Introduction} 
In this section the introduce some notation used in the rest of the paper and we collect some known results from the literature. 

First, it is convenient to write  
scheme \eqref{Runge--Kutta} in canonical Shu-Osher form \cite{SSPbook}
\begin{align} \label{cso}
    Y & = v_r u_n + \alpha_r \left( Y + \frac{h}{r} F\right)
\end{align}
where
\begin{align} \label{canonical-coefficients}
   v_r       = (I + r\bbA)^{-1} e\, , \qquad 
     \alpha_r   = r(I + r\bbA)^{-1} \bbA\, .
\end{align}
Observe that matrices $\bbA$ and $\alpha_r$ have the same structure (strictly lower triangular, lower triangular or full).

\begin{definition} \label{def:absmon}
The {\it radius of absolute monotonicity} of 
a Runge--Kutta method \eqref{Runge--Kutta} is the largest $r$ such that $v_r$ and $\alpha_r$  in \eqref{canonical-coefficients} exist and are non-negative:
\begin{align}R(\bbA)=\sup\left\{r\, | \, r=0 \text{ or } r>0, (I + r\bbA )^{-1} \hbox{ exists, and }   \alpha_r,v_r\ge 0 \right\}\, . \label{ramdef0}
\end{align}
\end{definition}
\noindent Recall that Definition \ref{def:absmon} is the one in \cite[Def. 2.4]{Kraaijevanger1991} using the notation given in \cite[Eq. (1.21)]{higueras2005a}. The quantity $R(\bbA)$ is also known as the SSP coefficient or
Kraaijevanger coefficient.
As usual, the inequalities above should be understood component--wise.

In \cite[Thm. 2.5]{ferracina2004} step size restrictions to obtain monotonicity are given in terms of the radius of absolute monotonicity of the method. Thus, the larger $R(\bbA)$ is, the larger is the step size restriction for monotonicity; in particular, if $R(\bbA)=0$, numerical monotonicity cannot be ensured.

Next, we consider perturbed Runge--Kutta methods \eqref{PRunge--Kutta}. 
To study absolute monotonicity of perturbed Runge--Kutta
methods, we write method \eqref{PRunge--Kutta} also in a canonical
Shu-Osher-like form
\begin{align} \label{dwRunge--Kutta_cso}
Y & = \gamma_r u_n + \alphaup_r  \left(Y + \frac{\Dt}{r}F\right) + 
                    \alphadown_r \left(Y - \frac{\Dt}{r}\Ft\right)\, , 
\end{align}
where
\begin{subequations} \label{pert-cso-coeff}
\begin{align}
\gamma_r & = (I + r\bbA + 2r\Apert)^{-1} e\,  ,\label{pert-cso-coeffa}\\[0.5ex]
\alphaup_r & = r(I + r\bbA + 2r\Apert)^{-1} (\bbA + \Apert)\, , \label{pert-cso-coeffb} \\[0.5ex] 
\alphadown_r & = r(I + r\bbA + 2r\Apert)^{-1} \Apert\, . \label{pert-cso-coeffc} 
\end{align}
\end{subequations}
Observe that method  \eqref{dwRunge--Kutta_cso}, with $ \gamma_r=(I-\alphaup_r-\alphadown_r)e$, is a perturbed Runge--Kutta scheme with Butcher coefficients 
\begin{align}\bbA=\frac{1}{r}(I-\alphaup_r-\alphadown_r)^{-1} (\alphaup_r-\alphadown_r)\, , \qquad \Apert=\frac{1}{r}(I-\alphaup_r-\alphadown_r)^{-1} \alphadown_r\, , \label{AApert} \end{align}
provided that $(I-\alphaup_r-\alphadown_r)^{-1}$ exists. 

\begin{definition}\label{def_radius_pert} \cite[Def. 3.1]{higueras2005a}
The {\it radius of absolute monotonicity} of a perturbed Runge--Kutta method  $(\bbA,\Apert)$ is
the largest $r$ such that $\gamma_r$, $\alphaup_r$ and $\alphadown_r$ in \eqref{pert-cso-coeff} exist and are non-negative:
\begin{align}R(\bbA,\Apert)=\sup\left\{r\, | \, r=0 \text{ or } \, r>0, \,(I + r\bbA + 2r\Apert)^{-1} \hbox{ exists, and }   \gamma_r,\, \alphaup_r,\, \alphadown_r\ge 0 \right\}\, . \label{ramdef}
\end{align}
\end{definition}

 For perturbation $(\bbA, \Apert)$, step size restrictions to obtain monotonicity are given in terms of $R(\bbA,\Apert)$ \cite[Thm. 3.5]{higueras2005a},  
 \begin{align}\label{eq:stepsize_r}
h\leq  R(\bbA,\Apert) \, h_0\,. 
 \end{align} 
Thus, perturbations with large values of $R(\bbA, \Apert)$ ensure larger step size restrictions for monotonicity.

 \begin{remark} {\it (Fictitious perturbations)}  \label{remark:ficticius}
As it has been pointed out in Section \ref{sec:example}, if  function $f$ in  \eqref{ode} satisfies both  \eqref{FEcond1} and \eqref{FEcond2}, we can
formally introduce a function $\ft=f$  to perturb fictitiously the Runge--Kutta method (see \eqref{PRunge--Kutta}). In this way, the standard (unperturbed) Runge--Kutta method can be written as \eqref{dwRunge--Kutta_cso}.  Thus, the results in this paper can also be used to ensure monotonicity for step size restrictions larger than the ones given in terms of the (unperturbed) SSP coefficient. \hfill $\square$
\end{remark} 

\begin{remark} \label{propertyC}  {\it (Property C)}   Most previous works, including \cite{ruuth2004,ruuth2006}, have focused
on methods with the following property:	
for each value of $j$
    \begin{align}\label{eqpropC}
    	\Kt_{ij}   \ne 0 \mbox{ (for some $i$)} \quad \implies  \quad
                  K_{ij}   = 0 \mbox{ (for all $i$).}
    \end{align}
In this case, we will say   that a perturbation $\Kt$ to a Runge--Kutta method $K$ possesses
    {\em property C}.  In words, property C means that in the $j$th column, only one of $K, \Kt$
has any nonzero entries.  Thus, only one of $f(y_j), \ft(y_j)$
need ever be evaluated, so only $s$ total function evaluations are required per step.
In \cite{gottlieb2006} it was shown that
for WENO discretizations, the cost of computing both
$f(y_j)$ and $\ft(y_j)$ is much less than twice the cost of computing $f(y_j)$
alone.  
Therefore methods without property C may also be of practical
interest.  In the present work, we do not assume property C. \hfill $\square$
\end{remark}

 \subsubsection{Zero-well-defined perturbations}
Regularity of $(I-\alphaup_r-\alphadown_r)$ is evidently important
in our study. Observe that from \eqref{pert-cso-coeff} we have
\begin{equation}\label{zerowell}
(I-\alphaup_r-\alphadown_r)(I+r \bbA)=(I-2 \alphadown_r)\, . 
\end{equation}
Consequently, if $I+r \bbA$ is regular for some $r$, then $(I-\alphaup_r-\alphadown_r)$ 
is regular if and only if $(I-2 \alphadown_r)$ is regular.

If $I-2\alphadown_r$ is singular, then the stage equations
do not have a unique solution even for the trivial ODE given
by $f=0$.  This motivates the following deinition.
\begin{definition}
Let a perturbed Runge--Kutta method \eqref{dwRunge--Kutta_cso} be given.
If $I-2\alphadown_r$ 
(defined by \eqref{pert-cso-coeffc}) is non-singular,
we say that the perturbation is {\it zero-well-defined}.  
\end{definition}

See \cite[Chap. 3]{SSPbook}
for the analogous definition in the context of traditional Runge--Kutta methods.

\subsection{Optimal perturbations}
In this section we answer question \ref{Q1} of Section
\ref{questions} by showing that every method can be perturbed so as to 
give a method with strictly positive SSP coefficient.
\begin{theorem}\label{thm:feasible}
Let $\bbA$ be a Runge-Kutta method that belongs to a specified class of methods 
(explicit, diagonally implicit, or fully implicit).  Then it is always possible to find a perturbation $\Apert$ within the same class such that $R(\bbA,\Apert)>0$.
\end{theorem}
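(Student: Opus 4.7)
The plan is to exhibit an explicit perturbation $\Apert$ within the same structural class as $\bbA$ and show that the perturbed method is absolutely monotonic at some $r>0$. I would choose $\Apert$ to have the same sparsity as $\bbA$, setting $(\Apert)_{ij} := \max\{0,-\bbA_{ij}\}+1$ at every position $(i,j)$ where the class permits a nonzero entry in $\Apert$, and $(\Apert)_{ij}:=0$ elsewhere. This forces both $\Apert \geq 0$ and $\bbA+\Apert \geq 0$ componentwise, with strict positivity in every structurally allowed position and exact zeros elsewhere. Since the three classes (explicit, DIRK, fully implicit) differ only in which positions may be nonzero, the recipe applies uniformly.

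Next I would analyse the canonical Shu-Osher coefficients \eqref{pert-cso-coeff} as $r\to 0^+$. Writing $M_r := I + r\bbA + 2r\Apert$, I would argue that $M_r^{-1}$ exists for all sufficiently small $r>0$: in the explicit and DIRK cases $M_r$ is lower triangular with diagonal tending to $1$, hence invertible; in the fully implicit case $\det M_r \to \det I = 1$ by continuity. The same continuity argument shows $I-2\alphadown_r$ is invertible for small $r$, so the method is zero-well-defined. Then $\gamma_r = M_r^{-1}e \to e > 0$, and the quotients $\alphaup_r/r = M_r^{-1}(\bbA+\Apert)$ and $\alphadown_r/r = M_r^{-1}\Apert$ converge entrywise to $\bbA+\Apert$ and $\Apert$, respectively.

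By construction, every structurally allowed entry of the limiting matrices $\bbA+\Apert$ and $\Apert$ is strictly positive, so the corresponding entries of $\alphaup_r$ and $\alphadown_r$ are strictly positive for all sufficiently small $r>0$. The delicate point I anticipate is that the structurally forbidden entries must remain \emph{exactly} zero (not merely $O(r^2)$), otherwise the Neumann expansion of $M_r^{-1}$ could produce a small negative correction in positions where the coefficient should vanish identically. This is handled by two structural observations: (i) in the explicit and DIRK cases, $M_r^{-1}$ inherits the lower triangular shape of $\bbA+\Apert$, so $M_r^{-1}(\bbA+\Apert)$ and $M_r^{-1}\Apert$ retain that shape for every $r$; (ii) in all three classes the last column of $\Apert$ and of $\bbA+\Apert$ is identically zero, hence so is the last column of each product. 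Combining exact preservation of structural zeros with strict positivity in the allowed positions yields $\gamma_r,\alphaup_r,\alphadown_r\ge 0$, and therefore $R(\bbA,\Apert) > 0$.
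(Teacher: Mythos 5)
Your proof is correct, and it verifies the theorem by a genuinely different route than the paper, even though the perturbation you construct is essentially the same one (all structurally allowed entries of $\Apert$ positive and large enough that $\bbA+\Apert\ge 0$). The paper does not touch the canonical coefficients at all: it invokes the characterization of $R(\bbA,\Apert)>0$ from \cite[Prop.~3.7]{higueras2005a}, namely the sign conditions $\bbA+\Apert\ge 0$, $\Apert\ge 0$ together with the incidence-matrix conditions $\hbox{Inc}((\bbA+2\Apert)(\bbA+\Apert))\le\hbox{Inc}(\bbA+\Apert)$ and $\hbox{Inc}((\bbA+2\Apert)\Apert)\le\hbox{Inc}(\Apert)$, and checks that the latter hold because the products preserve the (strictly) lower-triangular structure, resp.\ are unrestricted in the fully implicit case. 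You instead verify the definition \eqref{ramdef} directly via a limit $r\to 0^+$ applied to \eqref{pert-cso-coeff}. The point you flag as delicate --- that higher-order terms in the Neumann expansion of $M_r^{-1}$ could make a coefficient negative only at positions where the first-order limit vanishes, and that these are exactly the structurally forbidden positions where the coefficients vanish identically --- is precisely the content of the incidence conditions in the cited proposition, so your argument amounts to a self-contained proof of the needed direction of that result for this particular $\Apert$. (One cosmetic imprecision: in the explicit case $M_r^{-1}$ is \emph{unit} lower triangular rather than strictly lower triangular; the conclusion stands because the product of a unit lower triangular matrix with a strictly lower triangular one is strictly lower triangular.) The paper's route buys brevity given the external reference; yours buys independence from it, an explicit admissible $r>0$, and a built-in check of zero-well-definedness.
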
     

Thus it makes sense to deal with perturbations that give the largest SSP coefficient. We formalize this idea in the following definition.

\begin{definition}\label{def_opt_radius_pert}
The {\it optimal perturbed SSP coefficient} of a Runge--Kutta method $\bbA$ is
denoted by
$$\Rt(\bbA)=\sup_{\Apert} R(\bbA,\Apert)\, . 
$$
For a given method $\bbA$ that is (explicit/diagonally implicit/fully implicit), 
we consider the supremum  over perturbations $\Apert$ that are zero-well-defined and
correspond to the same class of methods.
A matrix $\Apert$ such that $R(\bbA,\Apert)=\Rt(\bbA)$ is called an
{\it optimal perturbation}.  
\end{definition}

Observe that a perturbed Runge--Kutta method $(\bbA,\Apert)$ can be interpreted as an
additive Runge--Kutta method $(\bbA+\Apert,\Apert)$ for functions $(f, \ft)$ (see Remark \ref{rem:additive}),
and conditions \eqref{pert-cso-coeff} are the ones required for the absolute
monotonicity of this additive scheme at $(z, \zt)=(-r, -r)$ (see
\cite{higueras2006}). From Lemma 2.8 in \cite{higueras2006}, we obtain that
the stability function $\phipert$ defined by \eqref{bistabfunc},
is  absolutely monotonic at $(\xi, \tilde \xi)=(-r, -r)$. Consequently,
\begin{equation} R(\bbA,\Apert)\leq \RLin(\bbA,\Apert)\leq \RLinopt(\bbA) \, . \label{bound1}
\end{equation}
Furthermore, from  SSP theory and inequality \eqref{bound:RRL},  we have
\begin{equation}R(\bbA)\leq R(\phi_K)\leq \RLinopt(\bbA)\, , \qquad  R(\bbA)\leq  \Rt(\bbA)\leq \RLinopt(\bbA)\, . \label{bound2}
\end{equation}
The following example illustrates that $R(\phi_K)$ can be either larger
or smaller than $\Rt(\bbA)$.  

\begin{example}\label{ex_2stage2order}
We consider the family of second order 2-stage Runge-Kutta methods \eqref{2eo2OptLin} for $\alpha\in\mathbb{R}$. For this family we have
\begin{align*}
v_r\geq 0 \quad & \Longleftrightarrow \qquad \alpha>0 \qquad \hbox{and}\quad 0\leq r\leq\frac{1}{\alpha}\, , \\[0.5ex]
\alpha_r\geq 0 \qquad & \Longleftrightarrow \qquad \alpha\geq\frac{1}{2} \quad \hbox{and}\quad 0\leq r\leq\frac{2\alpha-1}{\alpha}\, . 
\end{align*}
Thus
\begin{equation}\label{R2stageorder2}
R(\bbA)=\begin{cases} 0, & \hbox{if} \quad \displaystyle  \alpha\leq \frac{1}{2}\, , \\[1ex]
\displaystyle  \frac{2 \, \alpha -1}{\alpha }\, ,  & \displaystyle \hbox{if}   \quad\frac{1}{2}<\alpha \leq 1\, ,\\[1ex]
\displaystyle   \frac{1}{\alpha }\, ,  & \hbox{if}   \quad 1<\alpha\, . 
\end{cases}
\end{equation}
In Figure \ref{f22} we show the threshold factor $R(\phi_\bbA)$ (thin solid blue line) and the SSP coefficient  $R(\bbA)$ (thick solid black line). We also show the corresponding optimal coefficients for perturbed methods, namely,  the optimal threshold factor $\RLinopt(\bbA)$  (thin dashed blue line) given by  \eqref{RLinopt} in Example \ref{ex:2eo2}, and the optimal SSP coefficient $\Rt(\bbA)$ for the perturbed method (thick dashed black line) given by \eqref{radiopert22}.

We see that for optimal SSP method ($\alpha=1$) it is not possible to increase the SSP coefficient by means of perturbations. However, for $\alpha=(\sqrt{7}-1)/2$ it is possible to obtain a perturbation that raises the SSP coefficient to $\Rt(\bbA)=\RLinopt(\bbA)=\left(1+\sqrt{7}\right)\approx 1.21525$ (see \eqref{RLinopt}). 

We have that $R(\phi_\bbA)=\Rt(\bbA)=1$ for $\alpha=2/3, 1$.  For  $2/3< \alpha<1$ we obtain 
$R(\phi_\bbA) <\Rt(\bbA) 
$, whereas for $0<\alpha < 2/3$  and for $1< \alpha$  
we have $\Rt(\bbA)  < R(\phi_\bbA) $.

Coefficients of the perturbations that give rise to these values are given in
Appendix \ref{sec:2stage-details}.  \hfill $\square$
\end{example}

\begin{figure}\begin{center}\includegraphics[scale=0.45]{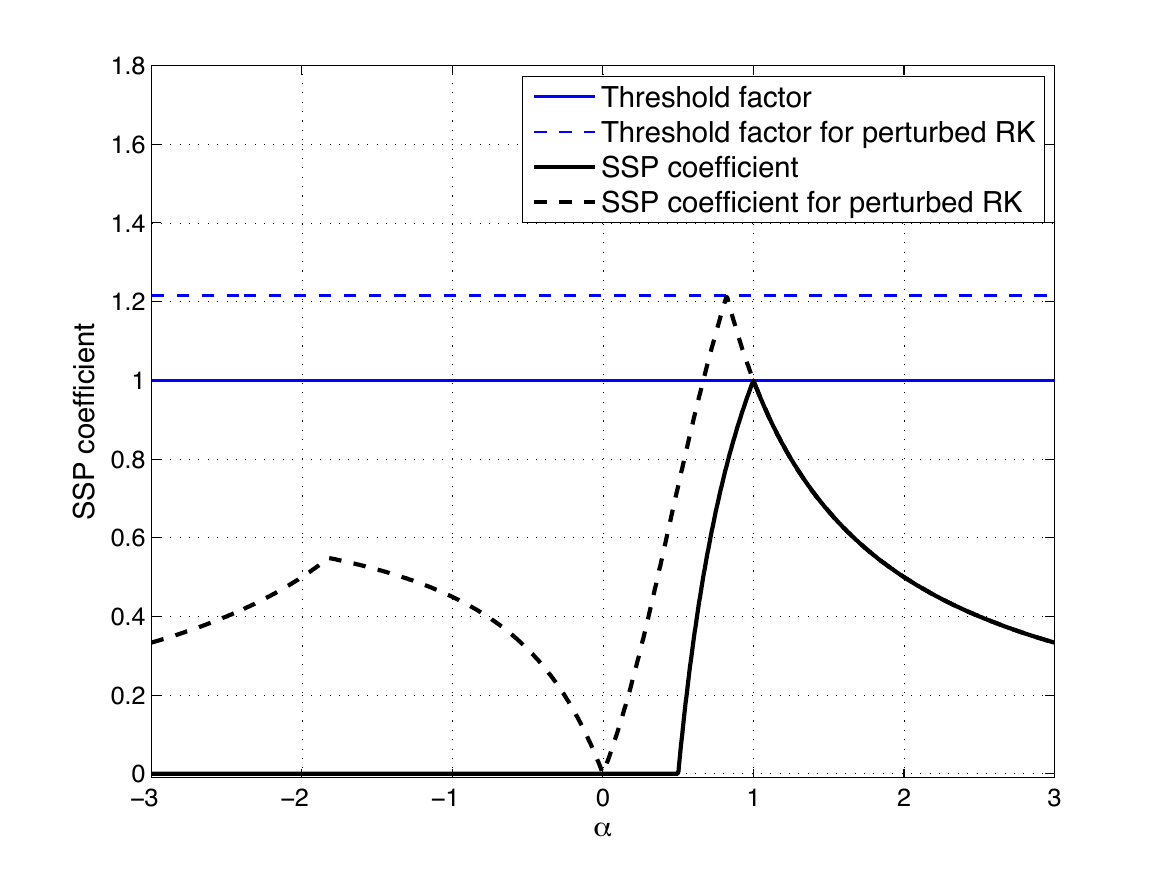}
\caption{Family of second order 2-stage methods: SSP coefficients for unperturbed methods and optimal SSP coefficients for perturbed methods.}
\label{f22}
\end{center}
\end{figure}

\subsection{Upper bounds on the SSP coefficient for perturbed Runge--Kutta methods\label{sec:nonlinear-bounds}}
In this section we answer questions \ref{Q2}  and \ref{Q3} of Section \ref{questions}
We begin by exploring some upper bounds on the SSP coefficient $\Rt(\bbA)$ where $\bbA$ is an $s$-stage order $p$ Runge--Kutta method. 
A straightforward upper bound is obtained from inequality \eqref{bound2} and Theorem \ref{incRtsp}:
\begin{align} \Rt(\bbA)\leq \sqrt[p]{s(s-1)\ldots (s-p+1)}\,  .\label{bound4}
\end{align}
Another bound is given by the next Theorem. 
\begin{theorem}\label{prop_vpos}
Consider an explicit Runge-Kutta method $K$  and let $r_e$ be the largest positive value such that vector $v_r$ in \eqref{canonical-coefficients} is non-negative. Then
\begin{equation}
\Rt(\bbA)\leq r_e\, . 
\end{equation}
\end{theorem}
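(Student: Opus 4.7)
The plan is to show that whenever the perturbed scheme $(\bbA,\Apert)$ is absolutely monotonic at $r>0$, the unperturbed vector $v_r$ is forced to be non-negative at the same $r$; then the bound $R(\bbA,\Apert)\le r_e$ follows immediately, and taking the supremum over $\Apert$ yields the theorem.

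First I would set up the algebra. Fix an arbitrary strictly lower-triangular perturbation $\Apert$ and a positive $r \le R(\bbA,\Apert)$, so that by \eqref{pert-cso-coeff} the three matrices $\gamma_r,\alphaup_r,\alphadown_r$ all exist and are componentwise non-negative. Using \eqref{zerowell}, which gives $(I-\alphaup_r-\alphadown_r)(I+r\bbA)=(I-2\alphadown_r)$, and noting that $(I-\alphaup_r-\alphadown_r)e=\gamma_r$ (this follows from specializing \eqref{dwRunge--Kutta_cso} to the trivial ODE $f=0$, a relation already noted before \eqref{AApert}), I obtain the key identity
\begin{equation*}
v_r \;=\; (I+r\bbA)^{-1}e \;=\; (I-2\alphadown_r)^{-1}(I-\alphaup_r-\alphadown_r)e \;=\; (I-2\alphadown_r)^{-1}\gamma_r.
\end{equation*}

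Next I would argue that both factors on the right are entrywise non-negative. Since $\bbA$ is explicit and $\Apert$ is taken within the explicit class, $\Apert$ is strictly lower triangular, and so is $\alphadown_r$ by \eqref{pert-cso-coeff}. Therefore $(2\alphadown_r)^s=0$, so the Neumann series terminates:
\begin{equation*}
(I-2\alphadown_r)^{-1} \;=\; \sum_{k=0}^{s-1}(2\alphadown_r)^k.
\end{equation*}
Since $\alphadown_r\ge 0$, every term in this finite sum is non-negative, and hence so is the inverse. Combined with $\gamma_r\ge 0$, the identity above gives $v_r\ge 0$.

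Finally I would close the argument: because this non-negativity holds for every $r$ in the interval $[0,R(\bbA,\Apert)]$ (the a.m. conditions being inherited by any smaller $r$), the definition of $r_e$ forces $R(\bbA,\Apert)\le r_e$; taking the supremum over all admissible $\Apert$ gives $\Rt(\bbA)\le r_e$, as claimed. The only real obstacle is spotting the identity $v_r=(I-2\alphadown_r)^{-1}\gamma_r$ via \eqref{zerowell}; once it is written down, the explicit (strictly lower-triangular) structure makes the non-negativity of $(I-2\alphadown_r)^{-1}$ automatic, so the rest is bookkeeping.
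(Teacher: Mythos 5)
Your proposal is correct and follows essentially the same route as the paper: both derive the identity $(I-2\alphadown_r)v_r=\gamma_r$ from \eqref{zerowell}, observe that $(I-2\alphadown_r)^{-1}\ge 0$ (the paper via the $M$-matrix property, you via the equivalent terminating Neumann series for the strictly lower-triangular $\alphadown_r$), and conclude $v_r\ge 0$. Your explicit handling of the supremum over $\Apert$ at the end is a minor tidiness improvement over the paper's direct choice $r=\Rt(\bbA)$, but the argument is the same.
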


From Theorem \ref{prop_vpos} we obtain that
\begin{equation}
{R}(\bbA)\leq \Rt(\bbA)\leq r_e\, . \label{in_re}
\end{equation}
Consequently, for those methods such that ${R}(\bbA)=r_e$, the SSP coefficient cannot be increased by perturbation. This is the case for the family of second-order two-stage methods. 
For $\alpha\geq 1$, ${R}(\bbA)= r_e=1/\alpha$ (see Example \ref{ex_2stage2order}). 

On the other hand, if ${R}(\bbA)<r_e$ one can try to find a perturbation to increase the SSP coefficient. This is the case for the classical 4-stage order 4 method for which ${R}(\bbA)=0$ and  $r_e\approx 1.2956$, the real root of $x^3 - 2 x^2 + 4 x - 4=0$.

Finally, another interesting bound, for explicit methods only, can be obtained in terms of the 
Butcher coefficients of the Runge-Kutta method $\bbA$. 
\begin{theorem} \label{Pbound} Consider an explicit Runge-Kutta method $\bbA$ with perturbed SSP coefficient $\Rt(\bbA)>0$. Let $\bbA=(a_{ij})$.  Then 
\begin{align}\Rt(\bbA)\leq  \frac{1}{\max_{ij} |a_{ij}|}\, . \label{Rpert}
\end{align}
\end{theorem}

Consequently, $$R(\bbA)\leq  \Rt (\bbA)\leq \frac{1}{\max_{ij} |a_{ij}|}\,  .$$ 
For those methods such that
$R (\bbA)= 1/{\max_{ij} |a_{ij}|}
$
it is not possible to increase the SSP coefficient by perturbing the method.
This is the case for all known optimal explicit SSP Runge--Kutta methods of orders one through
three, with any number of stages \cite{ketcheson2008}.

For the restricted class of perturbations considered in \cite{ruuth2004},
similar results were obtained in  \cite[Thms. 3.1, 3.4, 3.5 and
3.6]{ruuth2004}. Theorem \ref{Pbound} extends those results, showing that
no improvement in the radius of absolute monotonicity is possible for many 
optimal SSP methods, even when more general perturbations are considered.
Those methods include all optimal methods of order one or two, the optimal methods
of order three with $n^2$ stages (for any integer $n$), and the optimal methods
with $(s,p) \in \{(5,3), (6,3), (10,4) \}$.  Interestingly, the widely-used
optimal five-stage fourth-order method is an exception; it has 
$R(K) \approx 1.508$ while the Theorem \ref{Pbound} gives an upper bound of
$1.8349...$.  Numerical computations suggest that it can be perturbed to
achieve $\Rt(K) \approx 1.63979$.

\subsection{Relations among the Butcher and canonical Shu-Osher representations}\label{sec:ButcherShu}
To answer questions \ref{Q3} and \ref{Q4} in Section
\ref{questions}, two algorithms are proposed. In this section we prove some
technical results that justify some steps in Algorithms \ref{alg:explicit-LP}
and \ref{alg:2}  below.

For explicit Runge--Kutta methods (or other methods with one stage equal to $u_n$), some components of vector $v_r$  in \eqref{canonical-coefficients} 
can be moved to the first column of matrix $\alpha_r$ \cite[Remark 1]{higueras2005a}. This simple transformation may yield a larger value of $R(\bbA)$ -- and never yields a smaller value--. A similar transformation, that may give a larger
value of $R(\bbA,\Apert)$ but never smaller values,  exists for perturbations of this class of schemes.
\begin{proposition} \label{propfirstcol}
	Let an $s$-stage explicit perturbed Runge--Kutta method with coefficients 
    $\gamma_r, \alphaup_r, \alphadown_r \ge 0,$ be given where $r$ is the radius of absolute
    monotonicity of the method.  Consider the perturbed method with coefficients
    \begin{subequations} \label{transform}
    \begin{align}
    	\widehat{\gamma} & = (1, 0 , \ldots, 0)^t\, ,  \\
        \widehat{\alpha}^\textup{up}_{i,1} & = \alphaup_{i,1} + (\gamma_r)_{i}/2 & 2 \le i \le s  \\
        \widehat{\alpha}^\textup{down}_{i,1} & = \alphadown_{i,1} + (\gamma_r)_{i}/2 & 2 \le i \le s .
    \end{align}
    \end{subequations}
    Then the perturbed method with coefficients $(\gamma_r, \alphaup_r, \alphadown_r)$
    and the modified perturbed method with coefficients $(\widehat{\gamma}, 
    \widehat{\alpha}^\textup{up}, \widehat{\alpha}^\textup{down})$
    correspond to the same Runge--Kutta method $\bbA$.  The modified perturbed method has radius of absolute
    monotonicity at least equal to $r$.
\end{proposition}

\begin{remark}Proposition \ref{propfirstcol} is also valid for Runge-Kutta methods whose first row is equal to zero.\hfill $\square$
\end{remark}

In the Butcher form \eqref{PRunge--Kutta} it is obvious which
perturbed methods $(K,\Kt)$ correspond to a given method $K$.
In the canonical Shu-Osher form it is less obvious.
The following lemma characterizes which methods of the form \eqref{dwRunge--Kutta_cso}
are perturbations of a given method \eqref{cso}.
\begin{lemma}\label{equivprop}
If method \eqref{dwRunge--Kutta_cso} is a perturbation of method \eqref{cso}, then 
their coefficients are related as follows:
\begin{subequations} \label{alpharel1}
\begin{align} 
 (I-2\alphadown_r)\alpha_r & = (\alphaup_r - \alphadown_r) \label{alpharel1a} \\
 (I-2\alphadown_r)v_r & = \gamma_r.
\end{align}
\end{subequations}
Furthermore, if \eqref{alpharel1} holds and 
the perturbation is zero-well-defined,
then \eqref{dwRunge--Kutta_cso} is a perturbation of \eqref{cso}.
\end{lemma}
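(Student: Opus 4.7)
The plan is to reduce both parts of the lemma to the single matrix factorization identity
\[ I - \alphaup_r - \alphadown_r = (I - 2\alphadown_r)(I - \alpha_r). \]
Expanding the right-hand side and comparing with the left gives $\alphaup_r - \alphadown_r = (I - 2\alphadown_r)\alpha_r$, so the factorization is algebraically equivalent to \eqref{alpharel1a}. Once the factorization is in hand, \eqref{alpharel1b} comes for free by applying both sides to $e$: the consistency condition $\gamma_r = (I-\alphaup_r-\alphadown_r)e$ built into \eqref{dwRunge--Kutta_cso} together with $v_r = (I-\alpha_r)e$ from \eqref{canonical-coefficients} immediately yield $\gamma_r = (I - 2\alphadown_r)v_r$.

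For the forward direction I would set $M = I+r\bbA+2r\Apert$ and collect two elementary identities: from \eqref{canonical-coefficients} one has $(I-\alpha_r)^{-1} = I+r\bbA$, and from \eqref{pert-cso-coeff} one finds $\alphaup_r+\alphadown_r = rM^{-1}(\bbA+2\Apert) = I - M^{-1}$, hence $(I-\alphaup_r-\alphadown_r)^{-1} = M$, together with $\alphadown_r = rM^{-1}\Apert$. Combining $M = (I+r\bbA) + 2r\Apert$ with $2r\Apert = 2M\alphadown_r$ gives $M(I - 2\alphadown_r) = I+r\bbA$. Inverting both sides produces exactly the factorization, and hence \eqref{alpharel1}.

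For the converse, assume \eqref{alpharel1}, which is equivalent to the factorization. Zero-well-definedness of the perturbation supplies the invertibility of $I - 2\alphadown_r$, while $(I-\alpha_r)^{-1} = I + r\bbA$ is automatic, so $I-\alphaup_r-\alphadown_r = (I-2\alphadown_r)(I-\alpha_r)$ is invertible as well. The Butcher matrix of the perturbed method, read off from \eqref{AApert}, then satisfies
\[ \frac{1}{r}(I-\alphaup_r-\alphadown_r)^{-1}(\alphaup_r - \alphadown_r) = \frac{1}{r}(I-\alpha_r)^{-1}(I-2\alphadown_r)^{-1}(I-2\alphadown_r)\alpha_r = \frac{1}{r}(I-\alpha_r)^{-1}\alpha_r = \bbA, \]
which is precisely the Butcher matrix of \eqref{cso}. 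Hence \eqref{dwRunge--Kutta_cso} is a perturbation of \eqref{cso}.

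The main obstacle I anticipate is simply recognizing the factorization; once written down, both implications reduce to short algebraic verifications. A minor care is required in the converse to make sure invertibility of $I-\alphaup_r-\alphadown_r$ is obtained from the zero-well-defined hypothesis via the factorization itself, rather than invoked through \eqref{zerowell}, which tacitly presupposes the RK method in question.
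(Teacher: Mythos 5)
Your proof is correct, but it takes a more algebraic route than the paper. The paper argues at the level of the scheme itself: it sets $\ft=f$ in \eqref{dwRunge--Kutta_cso}, subtracts $2\alphadown_r Y$ from both sides to obtain $(I-2\alphadown_r)Y=\gamma_r u_n+(\alphaup_r-\alphadown_r)\bigl(Y+\tfrac{h}{r}F\bigr)$, substitutes \eqref{cso}, and equates the coefficients of $u_n$ and of $Y+\tfrac{h}{r}F$; the converse is obtained by inverting $I-2\alphadown_r$ and ``reversing the steps.'' You instead work entirely with the coefficient maps, organizing everything around the factorization $I-\alphaup_r-\alphadown_r=(I-2\alphadown_r)(I-\alpha_r)$ --- which is exactly the paper's identity \eqref{zerowell} rewritten using $(I-\alpha_r)^{-1}=I+r\bbA$ --- deriving it in the forward direction from \eqref{pert-cso-coeff} via $M(I-2\alphadown_r)=I+r\bbA$ with $M=I+r\bbA+2r\Apert$, and in the converse using it to check directly that \eqref{AApert} returns the Butcher matrix $\bbA=\tfrac{1}{r}(I-\alpha_r)^{-1}\alpha_r$ of \eqref{cso}. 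The paper's version is shorter and more operational (the two schemes visibly coincide when $\ft=f$), at the cost of an informal ``equate coefficients'' step and an unspecified ``reverse the steps'' for the converse; yours makes the converse fully explicit, isolates precisely where zero-well-definedness enters (invertibility of $I-2\alphadown_r$ propagates through the factorization to $I-\alphaup_r-\alphadown_r$), and correctly avoids the circularity of citing \eqref{zerowell} before the Butcher pair is known to exist. Your reading of the hypothesis in the forward direction --- that the perturbed coefficients arise from some $(\bbA,\Apert)$ via \eqref{pert-cso-coeff} --- is the inverse of \eqref{AApert} and matches the paper's definition, so no gap there; the only detail left tacit in your converse is that the second equation of \eqref{alpharel1} combined with the factorization yields the consistency condition $\gamma_r=(I-\alphaup_r-\alphadown_r)e$, but this is a one-line check.
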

 
Lemma \ref{equivprop} does {\em not} imply that the perturbation \eqref{dwRunge--Kutta_cso}
is unique for a given $r$; see Proposition \ref{propfirstcol}.

\begin{remark} \label{invertibility-remark}
The necessity of the zero-well-defined condition in the second part of
Lemma \ref{equivprop} can be seen from the following example.
We take the implicit trapezoidal Runge-Kutta method
\begin{align*}
 \begin{array}{c|cc}
0 & 0 & 0 \\
1/2 & 1/2 & 1/2 \\ \hline
    & 1/2 & 1/2
\end{array}.
\end{align*}
The canonical form \eqref{cso} is then
\begin{align*}
\alpha_r & = \begin{pmatrix}
0 & 0 & 0 \\[1ex]
\frac{r}{r+2} & \frac{r}{r+2} & 0 \\[1ex]
\frac{r}{r+2} & \frac{r}{r+2} & 0
\end{pmatrix}\, ,
& v_r & = \begin{pmatrix} 1 \\[1ex] \frac{2-r}{r+2} \\[1ex] \frac{2-r}{r+2} \end{pmatrix}.
\end{align*}
Then \eqref{alpharel1} is satisfied -- for any $r$ -- by
\begin{align*}
\alphaup = \alphadown & = \begin{pmatrix} 
1/3 & 0 & 0 \\
0 & 1/2 & 0 \\
0 & 1/2 & 0
\end{pmatrix}\, , &
\gamma & = \begin{pmatrix} 1/3 \\ 0 \\ 0 \end{pmatrix}.
\end{align*}
However, this method -- which involves a perturbation that is not zero-well-defined 
-- is not a perturbation of the original method. \hfill $\square$
\end{remark}

\subsection{Computing optimal perturbations\label{sec:algorithms}}
In this section we present two algorithms to symbolically
or numerically find the optimal perturbed SSP coefficient and a corresponding perturbation of a given Runge--Kutta method.  The first algorithm is proven to approximate the
optimal value to any accuracy, contingent on the computational solution of
linear program subproblems.  It is only valid for explicit perturbations.   The second
algorithm uses no floating-point approximations, and can
be applied to both explicit and implicit
methods, but it is not proven to give the optimal value. 
  The results of the
two algorithms coincide (to high precision) for all explicit methods on which 
we have tested them.

\subsubsection{Provably correct algorithm for finding optimal explicit perturbations}
In the foregoing, we have shown that finding an optimal perturbation
consists of determining the largest $r$ such that there exists a splitting
satisfying \eqref{alpharel1} with positive coefficients.
Note that the range of
values for which a method $(K,\Kt)$ is absolutely monotonic is
always the interval $[0, R(K,\Kt)]$.  Therefore, one way to
find the largest $r$ is to devise a method for testing for
a given $r$ whether there exists a perturbation $\Kt$ such that $R(K,\Kt)\ge r$.
For given method \eqref{Runge--Kutta} and value of $r$,
the system of equations \eqref{alpharel1} together with the inequalities
$\alphaup_r, \alphadown_r \ge 0$
constitutes a linear programming (LP) feasibility problem.
The following theorem is an immediate consequence of Lemma \ref{equivprop}.

\begin{theorem}\label{Theorem:5}
Let an $s$-stage Runge--Kutta method $\bbA$ and a positive number $r$ be given.  There exists a 
perturbation $\Apert$ with $R(\bbA,\Apert)\ge r$ if and only if there exists
an $(s+1)\times(s+1)$ matrix $\alphadown_r$ such that $(I-2\alphadown_r)$ is regular and
the following componentwise inequalities hold:
\begin{subequations} \label{LP}
\begin{align} 
 (I-2\alpha^{down}_r)\alpha_r + \alpha^{down}_r & \ge 0 \\
 (I-2\alpha^{down}_r)v_r & \ge 0 \\
 \alphadown_r \ge 0.
\end{align}
\end{subequations}
\end{theorem}
The linear program \eqref{LP} can be solved by standard LP solvers.
By embedding this solution in a one-dimensional root-finding algorithm,
optimal perturbations can be found.  An algorithm based on bisection 
is given as Algorithm \ref{alg:explicit-LP}.   For a prescribed
tolerance $\epsilon$, it returns a value that is less than
or equal to optimal perturbed radius of absolute monotonicity, 
and is within $\epsilon$ of that value.
\begin{algorithm}\caption{Optimal explicit perturbation}
\label{alg:explicit-LP}
\begin{algorithmic}
\INPUT $K, \epsilon$
\State $\rmax := 1/\max|a_{ij}|, \rmin := 0$.
\While {$\rmax - \rmin > \epsilon$}
	\State $r=(\rmax + \rmin)/2$.
	\State Compute the coefficient matrices $\alpha_r, v_r$ using \eqref{canonical-coefficients}.
	\State Solve the LP given by \eqref{LP}.
    \If {it is feasible} 
    	\State $\rmin:=r$  
    \Else
    	\State $\rmax:=r$
    \EndIf
\EndWhile \\
\Return $\rmin$
\end{algorithmic}
\end{algorithm}

Assuming the solution of the LP is correct, the algorithm provably
finds an optimal explicit perturbation.  However,
for implicit perturbations the LP solver may converge to a solution
(like the method in Remark \ref{invertibility-remark} above)
for which $I-2\alphadown_r$ is singular.

\subsubsection{Iterated splitting algorithm}
We next investigate how to choose $\alphaup_r, \alphadown_r$ directly so as
to find a perturbation with radius of a.m. at least $r$.
The following result suggests an approach.
\begin{lemma} \label{signlemma}
Given an explicit Runge--Kutta method \eqref{cso},
let $\alphaup_r\ge0, \alphadown_r\ge0$ denote coefficients of a 
zero-well-defined perturbation of \eqref{cso}.
Then there exist matrices $\alpha^+\ge 0$, $\alpha^-\ge 0$,  
 such that
\begin{align}  \label{alphar}
    \alphaup_r       = (I+2\alpha^-)^{-1} \alpha^+\, , \qquad  \qquad 
    \alphadown_r     = (I+2\alpha^-)^{-1} \alpha^-,
\end{align}
and $\alpha_r=\alpha^+ - \alpha^-$.
\end{lemma}

Thus, zero-well defined perturbations with $\alphaup_r\ge0, \alphadown_r\ge0$ come from splittings of the matrix $\alpha_r$ and expressions \eqref{alphar}.

In the next algorithm we use the following notation:
\begin{align*}
((x)^+)_{ij} & = \begin{cases}
x_{ij} & \mbox{ if } x_{ij}\ge0 \\
0 & \mbox{ if } x_{ij}<0.  \end{cases} &
((x)^-)_{ij} & = \begin{cases}
0 & \mbox{ if } x_{ij}\ge0 \\
-x_{ij} & \mbox{ if } x_{ij}<0\, ,   \end{cases}
\end{align*}
and thus $x=(x)^+-(x)^-$ is a sign splitting of matrix $x$, with $(x)^+\geq 0$, $(x)^-\geq 0$. 

Given a perturbed Runge-Kutta method \eqref{dwRunge--Kutta_cso} with $\gamma_r=e_1$, where $e_1=(1, 0, \ldots, 0)^t$, and $\alphaup$ or $\alphadown$ containing negative values,  
 we construct 
\begin{subequations}\label{newpert}
\begin{align}
\tilde \gamma_r&=\left(I+2 \, (\alphaup_r)^-+ 2 \,(\alphadown_r)^-\right)^{-1}  e_1\, ,\\[0.5ex]
\tilde\alpha_r^\textup{up}&=\left(I+2 \,(\alphaup_r)^-+ 2 \,(\alphadown_r)^-\right)^{-1} \left((\alphaup_r)^++ (\alphadown_r)^-\right)\, ,   \\[0.5ex]
\tilde\alpha_r^\textup{down}&=\left(I+2 \,(\alphaup_r)^-+ 2 \,(\alphadown_r)^-\right)^{-1}  \left((\alphaup_r)^-+(\alphadown_r)^+\right)\, ,  
\end{align}
\end{subequations}
where 
$\alphaup=(\alphaup_r)^+-(\alphaup_r)^-$, $\alphadown_r=(\alphadown_r)^+-(\alphadown_r)^-$, provided that $I+2 (\alphaup_r)^-+ 2 (\alphadown_r)^-$ is invertible. Using Lemma \ref{equivprop}, it is straightforward to prove that, if method $\alphaup_r$, $\alphadown_r$ is a perturbation of   \eqref{cso}, then \eqref{newpert} is also perturbation of \eqref{cso}. 

Next, for explicit methods, we perform transformation \eqref{transform}. 
In this way, \eqref{newpert} followed by transformation \eqref{transform} gives a perturbation of the form \eqref{dwRunge--Kutta_cso} with $\gamma_r=e_1$,  that we denote by  $\hat \alpha_r^\textup{up}$, $\hat \alpha_r^\textup{down}$. If $\hat \alpha_r^\textup{up}\geq 0$,  $\hat \alpha_r^\textup{down}\geq 0$, then $r$ is an SSP coefficient; otherwise, we can repeat the above process.  

The following lemma studies the sign of $(\hat \alpha_r^\textup{up})_{ij}$, $(\hat \alpha_r^\textup{down})_{ij}$ when   $(\alphaup_r)_{ij}<0$ or $(\alphadown_r)_{ij}<0$. For the sake of clarity, we drop the index $r$. 

\begin{lemma}\label{lemma_row} We consider a perturbed explicit Runge-Kutta method  with coefficients $\gamma=e_1$, $\alphaup$, $\alphadown$, and 
the perturbation   $\hat \alpha^\textup{up}$, $\hat \alpha^\textup{down}$ obtained by computing \eqref{newpert} followed by transformation \eqref{transform}.  Assume that  $j_0\geq 2$ is the first row with negative terms in $\alphaup$ or $\alphadown$. Let $m_0$ be the largest index $m_0\geq 1$ such that  $\alphaup_{j_0,m_0}<0$ or $\alphadown_{j_0,m_0} <0$. Then
\begin{enumerate}
\item For first to $(j_0-1)$-th row, we have: $\hat \alpha^\textup{up}_{i,j}=\alphaup_{i,j}$ and $\hat \alpha^\textup{down}_{i,j}=0$ for $1\leq i\leq j_0-1$, $1\leq j\leq j_0-2$. 
\item For the $j_0$-th row, we have:
\begin{enumerate}
\item If  $m_0=1$, then $\hat \alpha^\textup{up}_{j_0,1}<0$ or $\hat \alpha^\textup{down}_{j_0,1}<0$. 
\item If $m_0\geq 2$, then,  $\hat \alpha^\textup{up}_{j_0,m_0}\geq 0$  and $\hat \alpha^\textup{down}_{j_0,m_0}\geq 0$. 
\item For $1\leq m_0\leq j_0-2$, we have $\hat \alpha^\textup{up}_{j_0,\ell}\geq 0$  and $\hat \alpha^\textup{down}_{j_0,\ell}\geq 0$ for $\ell=m_0+1, \ldots, j_0-1$. 
\end{enumerate}
\end{enumerate}
 
 \end{lemma}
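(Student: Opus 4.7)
The strategy is to exploit strict lower triangularity of $\alphaup, \alphadown$ (from explicitness) together with the hypothesis that rows $1, \ldots, j_0-1$ of $\alphaup$ and $\alphadown$ contain no negative entries. Setting $S := (\alphaup)^- + (\alphadown)^-$ and $M := I + 2S$, these assumptions force $S$ to be strictly lower triangular with vanishing rows $1, \ldots, j_0-1$. The first key step is to establish that $(M^{-1})_{i,j} = \delta_{i,j} - 2 S_{i,j}$ for all $i \le j_0$, via the Neumann expansion $M^{-1} = I - 2S + 4S^2 - \cdots$: for $i \le j_0$, a contributing summand in $(S^2)_{i,j} = \sum_\ell S_{i,\ell} S_{\ell,j}$ would require $\ell < i \le j_0$ from strict lower triangularity in the first factor and $\ell \ge j_0$ from the vanishing-row hypothesis on the second, which is impossible, and similarly for higher powers. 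Consequently the first $j_0-1$ rows of $M^{-1}$ coincide with rows of $I$, while row $j_0$ of $M^{-1}$ equals $e_{j_0}^t - 2 S_{j_0,\cdot}$.

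For Part 1, the identity structure of the top rows of $M^{-1}$ propagates through \eqref{newpert}: $\tilde\gamma_i = \delta_{i,1}$, and the $i$-th rows of $\tilde\alpha^\textup{up}, \tilde\alpha^\textup{down}$ coincide with those of $(\alphaup)^+ + (\alphadown)^-$ and $(\alphaup)^- + (\alphadown)^+$ for $1 \le i \le j_0-1$. Since the sign splits are trivial on the nonnegative rows, these collapse to $\alphaup_{i,j}$ and $\alphadown_{i,j}$. Because $\tilde\gamma_i = 0$ for $2 \le i \le j_0-1$, the column-$1$ correction \eqref{transform} acts as the identity on these rows (the first row being already zero by explicitness), yielding the claim.

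For Part 2, substituting the row-$j_0$ formula for $M^{-1}$ into \eqref{newpert} gives
\[
\tilde\alpha^\textup{up}_{j_0,\ell} = (\alphaup)^+_{j_0,\ell} + (\alphadown)^-_{j_0,\ell} - 2\!\!\sum_{k\,:\,S_{j_0,k}>0}\!\! S_{j_0,k}\bigl((\alphaup)^+_{k,\ell} + (\alphadown)^-_{k,\ell}\bigr),
\]
and an analogous expression for $\tilde\alpha^\textup{down}_{j_0,\ell}$; the contributing indices satisfy $k \le m_0$. In cases (b) and (c) one has $\ell \ge m_0$ and $\ell \ge 2$, so every cross-term involves $\alphaup_{k,\ell}$ or $\alphadown_{k,\ell}$ with $k \le m_0 \le \ell$, which vanishes by strict lower triangularity; the transformation \eqref{transform} is inactive for $\ell \ge 2$, leaving $\hat\alpha^\textup{up}_{j_0,\ell} = (\alphaup)^+_{j_0,\ell} + (\alphadown)^-_{j_0,\ell} \ge 0$ and analogously for $\hat\alpha^\textup{down}$. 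In case (a), $m_0 = 1$ so the cross-sum collapses to $k=1$ whose row is zero by explicitness; the column-$1$ correction then adds $\tilde\gamma_{j_0}/2 = -S_{j_0,1}$, and a direct cancellation leaves $\hat\alpha^\textup{up}_{j_0,1} = \alphaup_{j_0,1}$ and $\hat\alpha^\textup{down}_{j_0,1} = \alphadown_{j_0,1}$, at least one of which is negative by the definition of $m_0$.

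The main obstacle is purely bookkeeping: composing the sign-split operation \eqref{newpert} with the asymmetric column-$1$ correction \eqref{transform} requires tracking the four sign-split matrices $(\alphaup)^\pm, (\alphadown)^\pm$, and the case split in Part 2 between $m_0 = 1$ and $m_0 \ge 2$ precisely reflects whether the column-$1$ correction is active on the relevant column. The most delicate point is the exact cancellation in case (a) that leaves $\alphaup_{j_0,1}, \alphadown_{j_0,1}$ unchanged, showing that a single iteration of the splitting cannot absorb a negative entry already in column $1$; this is presumably what motivates the iterative structure of the overall algorithm.
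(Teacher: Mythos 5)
Your proof is correct and follows essentially the same route as the paper's: you arrive at the same explicit formulas for the transformed row $j_0$ (the paper's \eqref{d1}--\eqref{dm1}) and run the identical case analysis on $m_0$, the only addition being the Neumann-series justification that $(I+2S)^{-1}$ acts as $I-2S$ on the first $j_0$ rows, which the paper dismisses as ``straightforward computations.'' Note only that your Part~1 computation actually yields $\hat\alpha^{\textup{up}}_{i,j}=\alphaup_{i,j}$ and $\hat\alpha^{\textup{down}}_{i,j}=\alphadown_{i,j}$ rather than the stated $0$ for the down part---but the paper's proof asserts the same conclusion under the same implicit assumption that $\alphadown$ vanishes above row $j_0$, so this is not a defect of your argument relative to the paper's.
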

 
Consequently, if matrices $\alphaup_r$ and $\alphadown_r$  contain negative elements in the second or later columns, an iterated construction of perturbations $\hat \alpha_r^\textup{up}$, $\hat \alpha_r^\textup{down}$ removes these negative values obtaining a perturbation with non-negative elements from second column on. However, if in a row $j_0$ we have:
\begin{equation}\label{stop}
  \alphaup_{j_0,1}<0 \, \qquad \hbox{and} \qquad \alphaup_{j_0,\ell}\geq 0 \qquad \ell=2, \ldots, j_0-1\, ,
\end{equation}
or
\begin{equation}\label{stop2}
  \alphadown_{j_0,1}<0 \, \qquad \hbox{and} \qquad \alphadown_{j_0,\ell}\geq 0 \qquad \ell=2, \ldots, j_0-1\, ,
\end{equation}
 the new perturbation $\hat \alpha_r^\textup{up}$, $\hat \alpha_r^\textup{down}$ will also contain negative elements in the first column.

We now give Algorithm \ref{alg:2} to determine whether there exists a perturbation 
with a.m. radius $r$ for a given method.
\begin{algorithm}\caption{Existence of a perturbation with radius $r$}
\label{alg:2}
\begin{algorithmic}
\INPUT $r, K$
    \State Compute the coefficient matrices $\alpha_r, v_r$ using \eqref{canonical-coefficients}.

    \State Set $\alphaup = \alpha_r$ and $\alphadown = 0$.
	\While {$\alphaup$ or $\alphadown$ has any negative entries}
	    \State If $K$ has a zero row, perform the transformation \eqref{transform}.
	    \State If $\alphaup, \alphadown \ge 0$, stop.  This is a feasible perturbation.
        \State If condition \eqref{stop} or \eqref{stop2} hold, stop. A feasible perturbation cannot be found.   
        \State Set $\alpha^- = (\alphaup)^- + (\alphadown)^+$ and 
    		  $\alpha^+ = (\alphaup)^+ + (\alphadown)^-$
        \State Compute a new splitting:
    		\begin{align*}
            	\alphaup & = \left(I + 2 ((\alphaup)^- + (\alphadown)^-)\right)^{-1}\alpha^+ \\
                \alphadown & =\left(I + 2((\alphaup)^- + (\alphadown)^-)\right)^{-1}\alpha^-
            \end{align*}
\EndWhile \\
\end{algorithmic}
\end{algorithm}

\begin{remark}
The difficulty in proving the correctness of Algorithm \ref{alg:2}
for explicit
methods is that one could use
$(\alpha_r)^+ + \delta$, $(\alpha_r)^- + \delta$,
in place of $(\alpha_r)^+, (\alpha_r)^-$, where $\delta$ is any non-negative matrix. \hfill $\square$
\end{remark}

\subsection{Examples and numerical tests}\label{sec:examples}

\subsubsection{Examples}
In this section we compute optimal perturbations of some existing methods,
using the algorithms described in the last section.  

\begin{table}
\begin{tabular}{ll|l|l|lll|lll}
\toprule
Order & Stages & Method & $R(K)$ &    $\Rt(K)$   & Bound  & Bound  & Property C \\
& & &   &   &  \eqref{Rpert} &\eqref{bound4}  &  \eqref{eqpropC}  \\
\midrule
1 & 1 & Forward Euler & 1 &            1 &        1 &       1 &                  True \\
\midrule
2 & 2 & Midpoint         & 0 &        0.732   &      1 &   1.414 &                   True \\
  & 2 & Min. trunc. error & 0.5 &           1 &    1.333 &   1.414 &                   True \\
  & 2 & SSP22 \cite{shu1988}          & 1   &          1 &        1 &   1.414 & True \\
  & 2 & SSP22* \cite{gottlieb2006} & 0.784   &      1.215   &  1.215 &   1.414 & True \\
\midrule
3 & 3 & Heun33 \cite{heun1900neue} & 0 &       0.776   &  1.333 &   1.817 &  False \\
  & 3 & SSP33 \cite{shu1988}   & 1 &            1 &        1 &   1.817 & True \\
\midrule
4 & 4 & RK44 (Kutta)          & 0 &        0.685 &        1 &   2.213 &                  False \\
  & 5 & Merson \cite{merson1957operational} & 0   &      0.242   &    0.5 &   3.309 & False \\
  & 10 &SSP104 \cite{ketcheson2008}       & 6 &            6 &        6 &   8.425 & False \\
\midrule
5 & 6 & Fehlberg \cite{fehlberg1969klassische}     & 0   & 0.057   &  0.125 & 3.727 & False \\
  & 7 & Dormand-Prince \cite{Dormand1980} & 0   &      0.040 &    0.086 &   4.789 & False \\
  & 7* & Bogacki-Shampine  \cite{Bogacki1996} & 0 &        0.313 &    0.859 &   5.827 & False \\
  & 7 & SSP75 \cite{ruuth2004} & 0 &        1.396 &    1.792 &   4.789 & False \\
  & 8 & SSP85 \cite{ruuth2004} & 0 &        1.875 &    1.919 &   5.827 & True \\
  & 9 & SSP95 \cite{ruuth2004} & 0 &        2.738 &    3.198 &   6.853 & False \\
\midrule
6 & 9 & Calvo \cite{calvo1990} & 0 &   0.021 &    0.059 &   6.265 & False \\
\midrule
8 & 13 & Prince-Dormand \cite{Prince1981} & 0   &      0.013   &  0.059 &   9.212 & False \\
\bottomrule
\end{tabular}
\caption{Properties of some Runge--Kutta methods and their optimal perturbations.  The optimal perturbed
radius of absolute monotonicity was computed by both the linear programming algorithm
and the iterated splitting algorithm; in every case they gave identical results (up to 
roundoff errors).  Decimal values have been truncated to the number of digits shown.
The Bogacki-Shampine method uses 8 stages but is first-same-as-last, so it is as efficient
as if it had 7 stages for non-rejected steps.
\label{tbl:explicit-methods1}}
\end{table}

 \begin{table}
\begin{tabular}{ll|l|ll|lll}
\toprule
Order & Stages & Method & $R(K)$ & $h_{[0,1]}$ & $\Rt(K)$ & $\tilde{h}_{[0,1]}$  &   \\
\midrule
1 & 1 & Forward Euler & 1 & 1.00 &          1 & 1.00 &       \\
\midrule
2 & 2 & Midpoint         & 0 & 0.02 &      0.732 & 0.73  &        \\
  & 2 & Min. trunc. error & 0.5 & 0.54 &          1 & 1.00 &      \\
  & 2 & SSP22 \cite{shu1988}          & 1 & 1.09 &          1 & 1.00 &   \\
  & 2 & SSP22* \cite{gottlieb2006} & 0.784 & 0.81 &      1.215 & 1.21 &      \\
\midrule
3 & 3 & Heun33 \cite{heun1900neue} & 0 & 0.00 &      0.776 & 0.90 &      \\
  & 3 & SSP33 \cite{shu1988}   & 1 & 1.00 &          1 & 1.00 &       \\
\midrule
4 & 4 & RK44 (Kutta)          & 0 & 0.17 &      0.685 & 0.68 &         \\
  & 5 & Merson \cite{merson1957operational} & 0 & 0.01 &      0.242 & 0.29    \\
  & 10 &SSP104 \cite{ketcheson2008}       & 6 & 6.04 &          6 & 6.00 &     \\
\midrule
5 & 6 & Fehlberg \cite{fehlberg1969klassische}     & 0 & 0.01 & 0.057 & 0.05 &     \\
  & 7 & Dormand-Prince \cite{Dormand1980} & 0 & 0.02 &      0.040 & 0.04 &      \\
  & 7* & Bogacki  \cite{Bogacki1996} & 0 & 0.06 &      0.313 & 0.31 &      \\
  & 7 & SSP75 \cite{ruuth2004} & 0 & 0.06 &      1.396 & 1.56 &      \\
  & 8 & SSP85 \cite{ruuth2004} & 0 & 0.08 &      1.875 & 1.87 &      \\
  & 9 & SSP95 \cite{ruuth2004} & 0 & 0.10 &      2.738 & 2.82 &      \\
\midrule
6 & 9 & Calvo \cite{calvo1990} & 0 & 0.04 & 0.021 & 0.02 &     \\
\midrule
8 & 13 & Prince-Dormand \cite{Prince1981} & 0 & 0.06 &      0.013 & 0.01 &      \\
\bottomrule
\end{tabular}
\caption{Problem \eqref{advect}: actual and predicted step size restrictions
for preservation of the numerical solution in the interval $[0,1]$; $R(K)$ and
$\Rt(K)$ are the SSP coefficient and the optimal perturbed SSP coefficient,
respectively, and $h_{[0,1]}$ and $\tilde{h}_{[0,1]}$   are the largest
step sizes that preserve the interval $[0,1]$ in practice for the unperturbed
method and the optimal perturbed method, respectively.}  
\label{tbl:explicit-methods}
\end{table}

We have computed optimal perturbations for several known explicit methods using the two 
algorithms described above.  In all cases, the two algorithms gave the same
values.  It thus seems possible that Algorithm \ref{alg:2}
also gives truly optimal results in general, but we do not have a proof.  Properties of the methods studied are given in Table \ref{tbl:explicit-methods1}.  
The values found have been truncated to three decimal places but are known to greater precision.  

For the 4-stage, order-four method of Kutta, the three-digit value of $\Rt(K)$ given in the table matches the value found by Shu and Osher.  However, the exact (irrational) value is slightly larger and is given in the Appendix.
The methods SSP75, SSP85, and SSP95 are optimal methods found in \cite{ruuth2004}, with property C  (see Remark \ref{propertyC}).  By considering  perturbed methods without property C, we obtain slightly larger coefficients for perturbations of SSP75 and SSP95.  On the other hand, relaxing the column assumption gives no benefit in the case of the SSP85 method. 

\subsubsection{Numerical test}
We also apply the methods -- both perturbed and unperturbed -- to the
variable-coefficient advection problem
\begin{align} \label{advect}
    u_t + (a(x,t) u)_x & = 0\, ,  \\
    u(0,t) & = 0\, ,  \nonumber \\
    u(x,0) & = g(x)\, ,  \nonumber \\
    a(x,t) & = \cos^4(200x + 400t)\, ,  \nonumber
\end{align}
representing a highly oscillatory flow field.  If $g(x)\in [0,1]$, then
the exact solution remains in $[0,1]$ for all time.    We consider the domain
$0<x<1$ and we semi-discretize using first-order upwind differencing in space 
on an equispaced grid with 20 points.  The forward invariance of
the interval $[0,1]$ is then preserved
by the explicit Euler method as long as $0 \le h \le 1$.  Application of any
Runge--Kutta method to this semi-discretization yields an iteration of the form
$$u_{n+1} = M(t_n,h) u_n$$
where $M(t_n, h)$ is a square matrix.  The initial vector $u_0$ is obtained from $g(x)$ at the spatial grid points; hence, if $g(x)\in[0,1]$, vector $u_0$ is also in $[0,1]$.  Consequently, the numerical solution of \eqref{advect} will
remain in $[0,1]$ if $g(x) \in [0,1]$ and $M$ satisfies
\begin{subequations} \label{Mcond}
\begin{align}
    m_{ij} & \ge 0 & \text{for all } i,j \\
    \sum_j m_{ij} & \le 1 & \text{for all } i.
\end{align}
\end{subequations}
In Table \ref{tbl:explicit-methods},    
in the column labeled $h_{[0,1]}$, we give the largest step size for which
the corresponding unperturbed method preserves the interval $[0,1]$; i.e.
the largest step size for which $M$ satisfies \eqref{Mcond}.
Similarly, in the column labeled $\tilde{h}_{[0,1]}$,
we give the largest step size for which the optimal perturbation of the method
(with first-order downwind differencing for the downwind operator)
preserves the interval $[0,1]$.  The values given are truncated (not rounded)
to two decimal places.  Most of the actual values agree very well with the
theoretical bounds.

Some additional interesting patterns are evident in the table and are discussed in the conclusions below.

\section{Conclusions}\label{sec:conclusions}
In this work we have studied SSP coefficients for perturbations of a given explicit Runge-Kutta method. We have considered both the linear and the nonlinear case, and have obtained useful bounds on the threshold factor and on the radius of absolute monotonicity
for perturbed Runge--Kutta methods.  
 We have also provided an algorithm for computing
optimal perturbations of explicit Runge-Kutta methods, and given optimal perturbations for many methods from the literature.
From Table \ref{tbl:explicit-methods} we see that
\begin{itemize}
	\item For most optimal SSP methods (up to order three), perturbation cannot yield a 
          larger coefficient.  This is evident already from the bound \eqref{Rpert}.
          For all other methods, some improvement is achieved.
	\item Consistent with Theorem \ref{Pbound}, for every method considered, 
    	  it is possible to achieve $\Rt>0$ by some perturbation.          
    \item The simple bound \eqref{Rpert} predicts the optimal coefficient to within a factor of three in
    		every case.
\end{itemize}

This work seems to provide a complete picture for the case of most interest: explicit
methods applied to nonlinear problems.
Nevertheless, some other interesting issues remain unsolved.  These include:
\begin{itemize}
	\item A method to compute optimal perturbations for linear problems.
	\item An algorithm for obtaining optimal splittings of implicit methods.
\end{itemize}
Besides, in this paper we have only considered perturbations $\tilde f$ such that  $\tilde h_0= h_0$ (see \eqref{FEcond1}-\eqref{FEcond2}), but the study done can be extended to the case $\tilde h_0\neq h_0$. In this way, a wider class of perturbations $\tilde f$ can be considered and larger SSP coefficients may be obtained. In a similar way, for fictitious perturbations (see Remark \ref{remark:ficticius}), monotonicity can be ensured with step size restrictions larger than the ones obtained with the results in this paper. 

These may be a starting point for future work.

\section{Proofs of the results in the paper}\label{sec:proofs}

This section contains the proofs of the different results in the paper (Theorems \ref{Theorem:1}-\ref{Pbound}, Propositions \ref{Proposition:1} and \ref{propfirstcol}, and Lemmas \ref{equivprop}-\ref{lemma_row}),  and an auxiliary lemma; the proofs of Propositions \ref{prop:rs1} and \ref{prop:rs2} and Theorem \ref{Theorem:5} are straightforward and they are omitted.

\begin{proof}  \textit{of \textbf{Theorem \ref{Theorem:1}.}}\\
The stability function $\phipert(z,\zt)$ is a bivariate polynomial and thus, for $r \le R(\phipert)$, 
it can be written in the form \eqref{dwtaylor1},
where the coefficients $\gamma_{j\ell}$ are non-negative and
(by consistency of the method) sum to unity.  Letting $z=hL$
and $\zt=h\tilde{L}$, applying $\|\cdot\|$, and using convexity
shows that  $\|\phipert(hL,-h\tilde{L})u\| \le \|u\|$.  \hfill $\square$
\end{proof}

\begin{proof}\textit{of \textbf{Proposition \ref{Proposition:1}.}}\\
From \eqref{bistabfuncDet},  the stability function \eqref{bistabfunc}  of any $s$-stage explicit perturbed Runge--Kutta method  is a bivariate polynomial of combined degree at most $s$. Furthermore, as $\phipert(z, -z)$ is the stability function of an $s$-stage Runge--Kutta scheme of linear order $p$, we have that $\phipert(z, -z)=\sum_{j=0}^p  z^j/j! + \sum_{j=p+1}^s \sigma_j z^j$. Thus, there is a bivariate polynomial $\Psi$ such that  $\phipert(z, \zt)=\phipert(z, -z)+(z+\zt)\Psi(z,\zt)$. As $\phipert$ has combined degree at most $s$, trivially  $\Psi$ is a polynomial of combined degree at most $s-1$.  \mbox{}\hfill $\square$
\end{proof}

\begin{lemma} \label{lem:1sp}
Let $\varphi(z)$ be a polynomial satisfying 
\begin{align}\label{polpert} 
\varphi(z) & = 1 + \gamma_1 z +\cdots + \gamma_p z^p +\gamma_{p+1} z^{p+1} + \cdots + \gamma_s z^s\\
\gamma_j & \geq \frac{1}{j!}, \quad  j=1, \ldots, p\,  . \nonumber
\end{align}
Then the radius of absolute monotonicity of  $\varphi$ satisfies
\begin{align}R(\varphi) \le\sqrt[p]{s(s-1)\ldots (s-p+1)}\, . \label{R1sp}
\end{align}
\end{lemma}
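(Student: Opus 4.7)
The plan is to translate absolute monotonicity at $-r$ into a non-negative coefficient statement about a shifted polynomial, then extract two scalar conditions at the single point $z=r$ that pin down the bound.

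First I would set $\psi(z) := \varphi(z-r)$. Since $\varphi$ has degree at most $s$ and $\varphi^{(k)}(-r) = \psi^{(k)}(0) \ge 0$ for every $k$, the polynomial $\psi$ has non-negative Taylor coefficients, say $\psi(z) = \sum_{k=0}^{s} c_k z^k$ with $c_k \ge 0$. This converts the pointwise a.m.\ hypothesis into a conic constraint on the coefficients, which is much easier to manipulate.

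Next I would extract two identities at $z=r$. From $\varphi(0)=1$ we get
\begin{equation*}
\psi(r) \;=\; \sum_{k=0}^{s} c_k r^k \;=\; 1,
\end{equation*}
and from $\gamma_p \ge 1/p!$ (i.e.\ $\varphi^{(p)}(0) \ge 1$) we get
\begin{equation*}
\psi^{(p)}(r) \;=\; p!\sum_{k=p}^{s} \binom{k}{p} c_k\, r^{k-p} \;\ge\; 1.
\end{equation*}
Note that only the single constraint $\gamma_p \ge 1/p!$ is needed; the weaker constraints for $j<p$ will not enter. Multiplying the second inequality by $r^p$ and using the elementary monotonicity $\binom{k}{p} \le \binom{s}{p}$ for $p \le k \le s$, together with $c_k \ge 0$, yields
\begin{equation*}
\frac{r^p}{p!} \;\le\; \sum_{k=p}^{s} \binom{k}{p} c_k\, r^{k} \;\le\; \binom{s}{p} \sum_{k=p}^{s} c_k r^{k} \;\le\; \binom{s}{p}\, \psi(r) \;=\; \binom{s}{p}.
\end{equation*}

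Rearranging gives $r^p \le p!\,\binom{s}{p} = s(s-1)\cdots(s-p+1)$, and taking $p$-th roots produces the claimed inequality. Since this holds for any $r$ at which $\varphi$ is a.m., it holds for $R(\varphi)$. The only step that requires any care is justifying the shift trick: that $\varphi^{(k)}(-r)\ge 0$ for all $k$ is exactly equivalent to the shifted polynomial having non-negative coefficients, which is immediate once one writes the Taylor expansion of $\varphi$ at $-r$ and recalls that $\varphi$ has finite degree. Everything else is routine once that reduction is in place.
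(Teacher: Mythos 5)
Your proof is correct and takes essentially the same route as the paper: your shifted polynomial $\psi(z)=\varphi(z-r)$ with non-negative coefficients $c_k$ is exactly the paper's expansion $\varphi(z)=\sum_j\alpha_j\left(1+z/r\right)^j$ under the substitution $\alpha_j=c_jr^j\ge 0$, with your identity $\psi(r)=\varphi(0)=1$ playing the role of $\sum_j\alpha_j=1$. The remaining steps --- isolating the coefficient of $z^p$, invoking only $\gamma_p\ge 1/p!$, and bounding $\binom{k}{p}\le\binom{s}{p}$ --- coincide with the paper's computation.
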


\begin{proof}  \textit{of \textbf{Lemma \ref{lem:1sp}.}}\\
If  $R(\varphi)=0$, inequality \eqref{R1sp}
is trivial.  Let $\varphi(z)$ satisfy \eqref{polpert}
and be absolutely monotonic at $-r$ with $r>0$.
Then it can be written as 
\begin{align}\label{Taylorphi}
\varphi(z)=\sum_{j=0}^s \alpha_j \left(1 + \frac{z}{r}\right)^j=\sum_{j=0}^s \alpha_j \left(\sum_{\ell=0}^j  \frac{z^\ell}{r^\ell} {j \choose \ell}\right)=\sum_{\ell=0}^s \left(\sum_{j=\ell}^s \alpha_j  {j \choose \ell}\right ) \frac{z^\ell}{r^\ell}\,, 
\end{align}
where $\alpha_j\geq 0$.  Observe that from \eqref{polpert} we get $\varphi(0)=1$, and thus in \eqref{Taylorphi} we have $\sum_{j} \alpha_j=1$. As $\varphi$ is of the form \eqref{polpert},   the coefficient of $z^p$ is larger than $1/p!$. Some computations give 
$$\frac{1}{p !} \leq \left(\sum_{j=p}^s \alpha_j  {j \choose p}\right ) \frac{1}{r^p} \leq  \left(\sum_{j=p}^s \alpha_j  \right ) {s \choose p} \, \frac{1}{r^p} \leq  {s \choose p} \frac{1}{r^p}=\frac{s \, (s-1) \cdots (s-p+1)}{ p ! \, \, r^p}. 
$$
Consequently, 
$ r\leq \sqrt[p]{s(s-1)\cdots (s-p+1)}\, .  $ 
\mbox{}\hfill $\square$
\end{proof}

 We remark that equality in \eqref{R1sp} is obtained for the polynomial
\begin{equation}\varphi(z)=\left(1+ \frac{z}{r}\right)^s\, , \label{optTihi}
\end{equation}
where $r=\sqrt[p]{s(s-1)\cdots (s-p+1)}$. 

\begin{proof}\textit{of \textbf{Theorem \ref{Theorem:2}.}}\\
If $R(\psi)=0$ for all $\psi \in \Pit_{s,p}$, then $\cRt_{s,p}=0$ and
inequality \eqref{incRtsp} is true. Otherwise, there exists a function  $\psi
\in \Pit_{s,p}$  a.m. at $(-r,-r)$ with $r>0$.  
By \cite[Lemmas 2.9 and 2.10]{higueras2006}, $\psi$ is a.m.
at the points $(\xi, \xi)$, with $\xi \in [-r, 0]$.  
Writing $\psi(z, \zt) = \sum \sum \mu_{jk} z^j \zt^k$
and differentiating shows that all coefficients $\mu_{jk}$ are non-negative
since $\psi$ is a.m. at $(0,0)$.  Thus  $\psi(z,z)$ (viewed as a function
of one variable) is of the form \eqref{polpert} and is a.m. at $-r$.
Application of Lemma \ref{lem:1sp} gives the desired result. \hfill $\square$
\end{proof}

\begin{proof}\textit{of \textbf{Theorem \ref{thm:feasible}}.}\\
From \cite[Prop. 3.7]{higueras2005a}, we have
$R(\bbA,\Apert)>0$ if and only if the Butcher coefficients satisfy
\begin{align}\label{ineqa}
    \bbA + \Apert \geq 0 \, , \qquad \Apert \geq 0\, ,
\end{align}
and the following inequalities  hold,
\begin{subequations} \label{ineqb}
\begin{eqnarray}
&& \hbox{Inc }((\bbA+ 2 \Apert)  (\bbA+ \Apert)) \leq 
\hbox{Inc }(\bbA+ \Apert)\, , \label{3.6} \\[1ex]
&& \hbox{Inc }((\bbA+ 2 \Apert) \Apert ) \leq 
\hbox{Inc }(\Apert)\, , 
\end{eqnarray}
\end{subequations}
where $\hbox{Inc } (F)$ denotes the incidence matrix of matrix $F$ defined as
$\hbox{Inc } (F)=(g_{ij})$ where $g_{ij}=1$ if $f_{ij}\neq 0$, and $g_{ij}=0$ 
if $f_{ij}=0$. 

Consider first the implicit case.  By making all entries of $\Apert$ positive
we can satisfy \eqref{ineqb}, and by making them large enough we can satisfy
\eqref{ineqa}.  For the explicit and diagonally implicit cases, note that if $\bbA, \Apert$ are (strictly) lower-triangular, then the left-hand sides
of \eqref{ineqb} are also.  Thus by making all the (strictly) lower-triangular
entries of $\Apert$ positive, and by taking them large enough, we can 
satisfy the above inequalities. \hfill $\square$
\end{proof}

\begin{proof}\textit{of \textbf{Theorem \ref{prop_vpos}.}}\\
Let $r=\Rt(\bbA)$.  Then $\gamma_r=(I-\alphaup-\alphadown)e\geq 0$, and thus
from \eqref{canonical-coefficients}  and \eqref{zerowell}   we get
\begin{equation}(I-2\alphadown_r)v_r   \geq 0\, . \label{vpos}
\end{equation}
As $\alphadown_r\geq 0$, and since we consider only explicit, zero-well-defined perturbations,
$I-2\alphadown_r$ is an $M$ matrix.  Thus $(I-2\alphadown_r)^{-1}\geq 0$. If we multiply \eqref{vpos} by  $(I-2\alphadown_r)^{-1}$ we obtain that $v_r\geq 0$. \mbox{} \hfill $\square$
\end{proof}

\begin{proof}\textit{of \textbf{Theorem \ref{Pbound}.}}\\
The proof is similar to that of \cite[Lemma 3.2]{ruuth2004}.
Consider an optimal perturbation $\Apert$ and set $r=\Rt (\bbA, \Apert)>0$;  consider too the canonical representation
\eqref{dwRunge--Kutta_cso}. Let
$\Lambda=\alphaup_r+\alphadown_r=(\alpha_{ij})$, $\Gamma=
\alphaup_r/r=(\beta_{ij})$, $\tilde \Gamma= \alphadown_r/r=(\tilde\beta_{ij})$;
observe that $\Lambda, \Gamma, \tilde \Gamma\geq 0$, and that $\Lambda=r
(\Gamma+\tilde \Gamma)$.  As $(I-\Lambda)e=\gamma_r\geq 0$ and $\alpha_{ik}\geq
0$, we have $\alpha_{ik}\leq 1$; as $(I-\Lambda)\bbA=\Gamma-\tilde \Gamma$, we
have
\begin{align}a_{ik}= \beta_{ik}-\tilde \beta_{ik} + \sum_{j=k+1}^{i-1} \alpha_{ij} a_{jk}\, . \label{Acoef}
\end{align}
As $\alpha_{ik}= r (\beta_{ik} + \tilde \beta_{ik})$, then $\beta_{ik} + \tilde \beta_{ik}= \alpha_{ik}/r\leq 1/r$. In particular, from \eqref{Acoef}, 
$$|a_{21}|=\left|\beta_{21} - \tilde \beta_{21}\right|\leq \beta_{21} + \tilde \beta_{21}\leq \frac{1}{r}\, . $$
We proceed by induction on row $\ell$ of $\bbA$. Assume that $|a_{ij}|\leq 1/r$, for $i= 2, \ldots, \ell$,  $j=1, \ldots , \ell-1$, and consider row $\ell+1$. Then, from  \eqref{Acoef}, 
\begin{align*}|a_{\ell+1, 1}|&=\left|\beta_{\ell+1, 1}-\tilde \beta_{\ell+1, 1} + \sum_{j=2}^{\ell} \alpha_{\ell+1, j} \, a_{j, 1}\right|\leq \beta_{\ell+1, 1}+\tilde \beta_{\ell+1, 1} + \sum_{j=2}^{\ell} \alpha_{\ell+1, j} |\, a_{j, 1}|\\
&\leq  \frac{1}{r} \alpha_{\ell+1, 1}+ \frac{1}{r} \sum_{j=2}^{\ell} \alpha_{\ell+1, j} \leq  \frac{1}{r} \sum_{j=1}^{\ell} \alpha_{\ell+1, j}  \leq \frac{1}{r}  \, . 
\end{align*}
A similar argument can be used to show that $|a_{\ell+1,j}|\leq 1/r$, $j= 2, \ldots, \ell$. The Theorem follows by induction. \hfill $\square$
\end{proof}

\begin{proof}\textit{of \textbf{Proposition \ref{propfirstcol}.}}\\
It is easily seen that the modified method is equivalent to the original
one when $f=\ft$, so they correspond to the same unperturbed method.
Meanwhile, the transformation never leads to negative coefficients,
so the modified method is a.m. at $r$. \hfill $\square$
\end{proof}

\begin{proof}\textit{of \textbf{Lemma \ref{equivprop}.}}\\
To prove the first part,
take $\ft = f$ in \eqref{dwRunge--Kutta_cso} to obtain:
\begin{align*}
Y & = \gamma_r u_n + (\alphaup_r + \alphadown_r) Y
                    + (\alphaup_r - \alphadown_r) \frac{\Dt}{r}F.
\end{align*}
Subtract $2\alphadown_r Y$ from both sides to get
\begin{align} \label{zwd}
(I-2\alphadown_r) Y & = \gamma_r u_n + (\alphaup_r - \alphadown_r) 
                    \left(Y +  \frac{\Dt}{r}F\right).
\end{align}
Substituting \eqref{cso} in the above gives
\begin{align*}
(I-2\alphadown_r) v_r u_n + (I-2\alphadown_r)\alpha_r \left(Y +  \frac{\Dt}{r}F\right)
    & = \gamma_r u_n + (\alphaup_r - \alphadown_r) \left(Y +  \frac{\Dt}{r}F\right),
\end{align*}
Equating coefficients yields \eqref{alpharel1}.

To prove the second part, assume $I-2\alphadown_r$ is invertible and
write \eqref{alpharel1} as
\begin{subequations} \label{alpharel2}
\begin{align}
 \alpha_r & = (I-2\alphadown_r)^{-1}(\alphaup_r - \alphadown_r) \\
 v_r & = (I-2\alphadown_r)^{-1}\gamma_r.
\end{align}
\end{subequations}
Substitute \eqref{alpharel2} in \eqref{cso}, multiply on the left by
$(I-2\alphadown)^{-1}$, and follow the steps above in reverse. \mbox{}\hfill $\square$
\end{proof}

\begin{proof}\textit{of \textbf{Lemma \ref{signlemma}.}}\\
Since the perturbation is zero-well-defined, we can define
\begin{align}\label{a1a2}
    \alpha^+   = (I-2\alphadown)^{-1} \alphaup_r\, ,  \qquad \qquad 
    \alpha^-   = (I-2\alphadown)^{-1} \alphadown_r\, .
\end{align}
Then, by \eqref{alpharel1a}, $\alpha_r=\alpha^+-\alpha^-$.  Furthermore, since $I-2\alphadown_r$ is an $M$-matrix,
we have $\alpha^+\ge0$ and $\alpha^-\ge0$.
Solving \eqref{a1a2} for $\alphaup_r,\alphadown_r$ gives \eqref{alphar}. \hfill $\square$
\end{proof}

 \begin{proof}\textit{of \textbf{Lemma \ref{lemma_row}.}}\\
 If $j_0$ is the first row with negative terms in $\alphaup$ or $\alphadown$, straightforward computations give that $\hat \alpha^\textup{up}_{i,j}=\alphaup_{i,j}$ and $\hat \alpha^\textup{down}_{i,j}=0$ for $1\leq i\leq j_0-1$, $1\leq j\leq j_0-2$, and
\begin{subequations} \begin{align}
\hat \alpha^\textup{up}_{j_0,1}&=  \alpha^\textup{up}_{j_0,1} -2  \sum_{i=2}^{j_0-1} \left((\alpha^\textup{up}_{j_0,i})^-+ (\alpha^\textup{down}_{j_0,i})^-\right) \,  \alpha^\textup{up}_{i,1}\, , \label{d1} \\\quad
\hat \alpha^\textup{up}_{j_0,\ell}&=  (\alpha^\textup{up}_{j_0,\ell})^++ (\alpha^\textup{down}_{j_0,\ell})^- -2  \sum_{i=\ell+1}^{j_0-1} \left((\alpha^\textup{up}_{j_0,i})^-+ (\alpha^\textup{down}_{j_0,i})^-\right) \, \alpha^\textup{up}_{i,\ell} \, , \quad \ell=2, \ldots, j_0-1\, .\label{dm} 
\end{align}
\end{subequations}
and
\begin{subequations}
\begin{align}
\hat \alpha^\textup{down}_{j_0,1}&=  \alpha^\textup{down}_{j_0,1} -2  \sum_{i=2}^{j_0-1} \left((\alpha^\textup{up}_{j_0,i})^-+ (\alpha^\textup{down}_{j_0,i})^-\right) \, \alpha^\textup{down}_{i,1}\, , \label{d11} \\\quad
\hat \alpha^\textup{down}_{j_0,\ell}&=  (\alpha^\textup{up}_{j_0,\ell})^-+ (\alpha^\textup{down}_{j_0,\ell})^+  -2  \sum_{i=\ell+1}^{j_0-1} \left( (\alpha^\textup{up}_{j_0,i})^-+ (\alpha^\textup{down}_{j_0,i})^-\right) \, \alpha^\textup{down}_{i,\ell} \, , \quad \ell=2, \ldots, j_0-1\, .\label{dm1} 
\end{align}
\end{subequations}
Let $m_0$ be the largest index $m_0\geq 1$ such that  $\alphaup_{j_0,m_0}<0$ or $\alphadown_{j_0,m_0} <0$. In this case,
$\alphaup_{j_0,i}\geq 0$, $\alphadown_{j_0, i}\geq 0$ for $i=m_0+1, \ldots, j_0-1$, and  thus 
$$(\alpha^\textup{up}_{j_0,i})^+=\alpha^\textup{up}_{j_0,i}\, , \quad (\alpha^\textup{down}_{j_0,i})^+=\alpha^\textup{down}_{j_0,i}\, , \quad (\alpha^\textup{up}_{j_0,i})^-=(\alpha^\textup{down}_{j_0,i})^-=0\, ,\qquad i=m_0+1, \ldots, j_0-1\, . $$ 
 If $m_0=1$, from \eqref{d1} and \eqref{d11} we get  $\hat \alpha^\textup{up}_{j_0,1} = \alpha^\textup{up}_{j_0,1}$ and $\hat \alpha^\textup{down}_{j_0,1}= \alpha^\textup{down}_{j_0,1}$,  and thus $\hat \alpha^\textup{up}_{j_0,1}<0$ or $\hat \alpha^\textup{down}_{j_0,1}<0$.  If $m_0\ge  2$, from \eqref{dm} and \eqref{dm1} we get
 $$\hat \alpha^\textup{up}_{j_0,m_0} = (\alpha^\textup{up}_{j_0,m_0})^++ (\alpha^\textup{down}_{j_0,m_0})^- \geq 0\, , \qquad \hat \alpha^\textup{down}_{j_0,m_0} =  (\alpha^\textup{up}_{j_0,m_0})^-+(\alpha^\textup{down}_{j_0,m_0})^+ \geq 0\, . 
 $$
Finally, for $1\leq m_0\leq j_0-2$, from  \eqref{dm} and \eqref{dm1} we get that, for $\ell=m_0+1, \ldots, j_0-1$, we have
$$\hat \alpha^\textup{up}_{j_0,\ell} = (\alpha^\textup{up}_{j_0,\ell})^+ \geq 0\, ,  \qquad \hat \alpha^\textup{down}_{j_0,\ell} =  (\alpha^\textup{down}_{j_0,\ell})^+ \geq 0\,.   
$$
\mbox{}\hfill $\square$
 \end{proof}

\section{Appendix}

In this section we give additional details on SSP coefficients and optimal perturbations of
second order 2-stage Runge--Kutta methods and the classical 4-stage fourth order Runge--Kutta method. 

\subsection{Second order 2-stage methods} \label{sec:2stage-details}

We consider the family of 2-stage second order methods \eqref{2eo2OptLin}. In example \ref{ex:2eo2}   we studied perturbations that increase the SSP coefficient for the linear case. For nonlinear problems, in example \ref{ex_2stage2order},  figure \ref{f22} shows the values of $\Rt(\bbA)$ for $\alpha\in[-3,3]$. 

In this section,  for each $\alpha$,  we give the expressions for $\Rt(\bbA)$ and we show optimal perturbations $\Apert_{NL}$ such that $R(\bbA, \Apert_{NL})=\Rt(\bbA)$.  It is important to point out the convenience of choosing 
$\Apert_{NL}= \Apert_{L}$, where  $\Apert_{L}$ denotes the optimal perturbation for the linear case. In this case, we have not only $R(\bbA, \Apert_{L})=\Rt(\bbA)$ but also $\RLin(\bbA,  \Apert_{NL})=\RLinopt(\bbA)$. The computations required to obtain the results in this section have been done with the symbolic computation program \emph{Mathematica}.

If we denote by $r=\Rt(\bbA)$,  we have that
\begin{equation}
r= \begin{cases}
\displaystyle \frac{1}{|\alpha|}\, ,  &  \qquad \displaystyle  \hbox{if} \quad  \alpha \in\left(-\infty, -\frac{1}{2} \left(1+\sqrt{7}\right)\right]\bigcup  \left[\frac{1}{2} \left(-1+\sqrt{7}\right), \infty\right)\,,  \\[3ex]
\displaystyle   \frac{-1+\alpha+\sqrt{ 3 \alpha ^2-2 \alpha +1} 
   }{|\alpha| }\, ,  &   \qquad \displaystyle  \hbox{if} \quad  \alpha\in\left(-\frac{1}{2} \left(1+\sqrt{7}\right), 0\right) \bigcup \left(0, \frac{1}{2} \left(-1+\sqrt{7}\right)\right)\, . \end{cases}\label{radiopert22}
\end{equation}
Next we give optimal perturbations $\Apert_{NL}$. 

For $\alpha<0$, we obtain that it is not possible to obtain a perturbation of the form \eqref{2eo2OptLin}  with $\tilde b_2=0$ and $\tilde a_{21}=0$. Consequently, $\Apert_{NL}\neq \Apert_{L}$ and we always have that $\RLin(\bbA,  \Apert_{NL})<\RLinopt(\bbA)$. Optimal perturbations of the form \eqref{2eo2OptLin} for different values of $\alpha<0$ must satisfy the following conditions.
\begin{itemize}   
\item For $-\frac{1}{2} \left( 1+\sqrt{7}\right)\leq\alpha <0$, the coefficients $\tilde a_{21}$, $\tilde b_1$ and $\tilde b_2$ in $\Apert_{NL}$ must satisfy
   $$-\alpha \leq \tilde a_{21}\leq \frac{1-r \, \alpha}{2 \, r}\, , \qquad \tilde b_1=-\frac{r\, \tilde a_{21}}{2\, \alpha } \, , \qquad \tilde b_2=-\frac{1}{2\, \alpha}\, , 
   $$
   where $r=\Rt(\bbA)$.
\item For $\alpha\leq-\frac{1}{2} \left( 1+\sqrt{7}\right)$,    
   we should have 
   $$\tilde a_{21}=-\alpha,\qquad  -\frac{1}{2 \,\alpha }\leq\tilde b_1\leq \frac{-2 \,\alpha ^2-2
 \,  \alpha +1}{4\, \alpha }\, , \qquad  -\frac{1}{2 \,\alpha }\leq
  \tilde b_2\leq \frac{2\, \alpha  \,\tilde b_1-1}{4\, \alpha
   }\, . $$
\end{itemize}
For  $\alpha>0$ we can find optimal perturbations with $\tilde b_2=0$ and $\tilde a_{21}=0$. Coefficient $\bt_1$ must satisfy the following conditions.
\begin{itemize}
\item For $0<\alpha \leq  
   \left(-1+\sqrt{7}\right)/2$,  we have that   
   \begin{align}  \qquad \bt_1=\frac{\sqrt{3 \alpha ^2-2
   \alpha +1}-\alpha}{2 \alpha }  \, . \label{tb1NL}
   \end{align}   
Thus there is a unique $\Apert_{NL}$ of the form \eqref{2eo2OptLin}. In this case, we   have  $R(\bbA)<R(\bbA, \Apert_{NL})=\Rt(\bbA)$.

   \item For $\left(-1+\sqrt{7}\right)/2<\alpha< 1$, we also get $R(\bbA)<\Rt(\bbA)$,  but in this case the optimal perturbation  $\Apert_{NL}$ is not unique. All the perturbations with $\bt_1$ satisfying
$$   \frac{1-\alpha }{\alpha } \leq \bt_1\leq
   \frac{2 \alpha ^2-2 \alpha +1}{4 \alpha } \, , 
$$
are optimal. In particular, we can take $\Apert_{NL}=\Apert_{L}$. With this choice, $R(\bbA, \Apert_{L})=\Rt(\bbA)=1/\alpha$ and $\RLin(\bbA,  \Apert_{L})=\RLinopt(\bbA)\approx 1.22$. Furthermore,   $\alpha=\left(-1+\sqrt{7}\right)/2$  provides the largest  SSP coefficient within the family of 2-stage second order method (see figure \ref{f22}).

\item For $1\leq \alpha$, we have $R(\bbA)=\Rt(\bbA)=1/\alpha$ and the optimal perturbation $\Apert_{NL}$  is not unique. All the values
$$0\leq \bt_1\leq
   \frac{2 \alpha ^2-2 \alpha +1}{4 \alpha } 
$$
give  optimal perturbations. We can take $\Apert_{NL}=0$,   but in this case $\RLin(\bbA, 0)< \RLinopt(\bbA)$. A better choice is $\Apert_{NL}=\Apert_{L}$. Observe that, for $\alpha=1$, we get the optimal SSP coefficient $R(\bbA)=1$ that cannot be increased by perturbations.
  \end{itemize}

Next, we consider some concrete values of $\alpha$ to show the the expressions of the perturbations. For each value, we give the Butcher tableau of the perturbation and matrices  $\alphaup$ and $\alphadown$ in \eqref{dwRunge--Kutta_cso}. 
\begin{itemize}
\item For $\alpha=1/2$ we get method RK2a in \cite{HuKoVa} with $R(\bbA)=0$. With perturbation
\begin{align} \label{midpointRKpert}
\Apert= \begin{pmatrix}
0 & 0 & 0 \\
0 & 0 & 0\\
\bt_1 & 0 & 0
\end{pmatrix} , \, \alphaup= \begin{pmatrix}
0 & 0 & 0 \\
\bt_1 & 0 & 0\\
0 & 2 \bt_1  & 0
\end{pmatrix}
  , \,  \alphadown = \begin{pmatrix}
0 & 0 & 0 \\
0 & 0 & 0\\
1-2\bt_1  & 0 & 0
\end{pmatrix}
 , \, \gamma = \begin{pmatrix}
1\\
1-\bt_1\\
0
\end{pmatrix}, 
\end{align}
where $\bt_1=\frac{1}{2} \left(\sqrt{3}-1\right)$, we get  $R(\bbA,\Apert) =\RLin(\bbA, \Apert)= \sqrt{3}-1$. 
\item For $\alpha=2/3$,  we have a nontrivial SSP coefficient $\Rt(\bbA)=1/2$, but we can increase this value to $R(\bbA,\Apert_1)=\Rt(\bbA)=1$ with perturbation
\begin{align*}
\Apert_1= \begin{pmatrix}
0 & 0 & 0 \\
0 & 0 & 0\\
1/4 & 0 & 0
\end{pmatrix}
\, , \, \alphaup= \begin{pmatrix}
0 & 0 & 0 \\
2/3 & 0 & 0\\
0 & 3/4 & 0
\end{pmatrix}
\, , \,  \alphadown = \begin{pmatrix}
0 & 0 & 0 \\
0 & 0 & 0\\
1/4 & 0 & 0
\end{pmatrix}
\, , \, \gamma  = \begin{pmatrix}
1\\
1/3\\
0
\end{pmatrix}\, . 
\end{align*}
For this perturbation, $R(\phi_\bbA)=\RLin(\bbA, \Apert_1)=1$. We can take $\gamma=(1,0,0)^t$ by modifying the first column of $\alphaup$ and $\alphadown$ according to \eqref{transform},
\begin{align*}
\Apert_2= \begin{pmatrix}
0 & 0 & 0 \\
1/6 & 0 & 0\\
3/8 & 0 & 0
\end{pmatrix}\, , \, \alphaup= \begin{pmatrix}
0 & 0 & 0 \\
5/6 & 0 & 0\\
0 & 3/4 & 0
\end{pmatrix}
\, , \,  \alphadown = \begin{pmatrix}
0 & 0 & 0 \\
1/6 & 0 & 0\\
1/4 & 0 & 0
\end{pmatrix}
\, , \, \gamma = \begin{pmatrix}
1\\
0\\
0
\end{pmatrix}. 
\end{align*}

\item As it has been pointed out above, the largest value in the $\alpha$-family of 2-stage second order schemes is $\Rt(\bbA)=(1+\sqrt{7})/3$ and it is obtained for $\alpha=(\sqrt{7}-1)/2$. The perturbation is  of the form \eqref{2eo2OptLin} with $\bt_1= \left(\sqrt{7}-2\right)/2$, and
\begin{align*}
\alphaup= \begin{pmatrix}
0 & 0 & 0 \\
1  & 0 & 0\\
0 & \frac{1}{9} \left(4+\sqrt{7}\right)  & 0
\end{pmatrix}
\, , \,  \alphadown = \begin{pmatrix}
0 & 0 & 0 \\
0 & 0 & 0\\
\frac{1}{9} \left(5-\sqrt{7}\right)  & 0 & 0
\end{pmatrix}
\, , \, \gamma_r = \begin{pmatrix}
1\\
0 \\
0
\end{pmatrix}
\end{align*}
This is the perturbation obtained in \cite[Table V]{gottlieb2006} by numerical search in the class of perturbations considered in \cite{gottlieb2006}. 
\end{itemize}

\subsection{Classical fourth order 4-stage method}\label{4eo4L}

For nonlinear problems, applying the analysis above, we find that the optimal 
perturbation of the classical method has SSP coefficient given by
the real root of $x^3+2 x^2+4 x-4=0$, which is approximately
$\Rt(\bbA)\approx 0.685016$.  The corresponding perturbation is
{\em not unique.}  For instance, we can take $\gamma_r=(1,0,0,0,0)$,
and all entries of $\alphadown_r$ equal to zero except
\begin{align}
	(\alphadown_r)_{31} & = \frac{r^2}{4}   & (\alphadown_r)_{42} & = \frac{r^2}{2},  
\end{align}
where $r=\Rt(\bbA)$.
However, there exist other optimal perturbations with additionally $(\alphadown_r)_{42}=\epsilon$ where $0\le \epsilon \le 0.782$.  

We remark that nearly-optimal perturbations for this method are given
in \cite[p. 448]{shu1988} and \cite{Higueras2010}.
Interestingly, these different perturbed methods have different values of
$\RLin(\bbA, \Apert)$.

%  \bibliography{optimal_splitting}

\end{document}